\setlist[enumerate]{leftmargin=.5in}
\setlist[itemize]{leftmargin=.5in}
\newcommand{\rev}[1]{\textcolor{black}{#1}}
\title{Block preconditioning of stochastic Galerkin problems:\\
	New two-sided guaranteed spectral bounds\thanks{This version dated August 20, 2019.
		\funding{The work of M.\,K. was supported by the Czech Academy of Sciences through the project L100861901 (Programme for promising human resources -- postdocs) and by the Ministry of Education, Youth and Sports of the Czech Republic through the project LQ1602 (IT4Innovations excellence in science). The work of I.\,P. was supported by the Grant Agency of the Czech Republic under the contract No.~17-04150J.}}}
\author{Marie Kub\'{i}nov\'{a}\thanks{Institute of Geonics of the CAS, Ostrava, Czech Republic
		(\email{marie.kubinova@ugn.cas.cz}, \href{http://www.ugn.cas.cz/\%7Ekubinova/}{http://www.ugn.cas.cz/\textasciitilde kubinova}).}
	\and Ivana Pultarov\'{a}\thanks{Faculty of Civil Engineering, Czech Technical University, Prague, Czech Republic,
		\rev{and College of Polytechnics Jihlava, Czech Republic}
		(\email{ivana.pultarova@cvut.cz}, \url{http://mat.fsv.cvut.cz/ivana}).}}
\begin{document}

\maketitle

\begin{abstract}
  The paper focuses on numerical solution of parametrized \rev{diffusion} equations with scalar parameter-dependent coefficient function by the stochastic (spectral) Galerkin method. We study preconditioning of the related discretized problems using preconditioners obtained by modifying the stochastic part of the partial differential equation. We present a simple but general approach for obtaining two-sided bounds to the spectrum of the resulting  matrices, based on a particular splitting of the discretized operator. Using this tool and considering the stochastic approximation space formed by classical orthogonal polynomials, we obtain new spectral 
  bounds depending solely on the properties of the coefficient function and 
  the type of the approximation polynomials for several classes of block-diagonal preconditioners.
  These bounds are guaranteed and applicable to various distributions of parameters. Moreover, the conditions on the parameter-dependent coefficient function are only local, and therefore less restrictive than those usually assumed in the literatur\rev{e.}
\end{abstract}

\begin{keywords}
  	stochastic Galerkin method, \rev{diffusion problem}, preconditioning, block-diagonal preconditioning, spectral bounds
\end{keywords}

\begin{AMS}
  	65F08, 65N22
\end{AMS}

\section{Introduction}\label{sec:intro}

Growing interest in uncertainty quantification of numerical solutions of partial differential equations stimulates new modifications of standard numerical methods. A popular choice for partial differential equations with parametrized or uncertain data is the stochastic Galerkin method \cite{Babuska2004Galerkin,XiuBook}. Similarly to deterministic problems, approximate solutions, which depend on physical and stochastic variables (parameters), are searched for in finite-dimensional subspaces of the original Hilbert space. More precisely, the approximate solutions are orthogonal projections of the exact solution to the finite-dimensional subspaces with respect to the energy inner product defined by the operator of the equation; see, e.g.,~\cite{Bespalov2014Energy,Eigel2016Local,EigelPfeifer2017,Powell2009Block}. The approximation subspaces are considered in the form of a tensor product of a physical variable space (finite-element functions) and a stochastic variable space (polynomials); see, e.g.,~\cite{Babuska2004Galerkin,Ernst2010Stochastic}. The form and  qualities of the system matrix $\bm A$ of the discretized problem are determined by the structure of the uncertain data and the type of the finite-dimensional solution spaces. For special classes of parameters, it was shown, see, e.g., \cite{Powell2009Block,Sousedik2014Hierarchical}, that certain block-diagonal matrices are spectrally equivalent to $\bm A$ independently of the degree of polynomials and the number of random parameters, and thus they can be used for preconditioning. Having a good preconditioning method or, in other words, a good and feasible approximation of $\bm{A}^{-1}$, we may also efficiently estimate a~posteriori the energy norm of the error during iterative solution processes~\cite{AinsworthOden,Bespalov2014Energy,Crowder2018,
Eigel2016Local,Khan2018Robust}. This estimate can be used in adaptive algorithms~\cite{Bespalov2014Energy,Crowder2018Efficient,Eigel2014Adaptive}. In practice, matrix $\bm A$ is never built explicitly, only matrix-vector products are 
evaluated (\cite{PowellSilvesterSim}).

In this paper, we focus on matrices arising in the discretized stochastic Galerkin method and present new guaranteed two-sided bounds to the spectra of the preconditioned matrices for several types of preconditioner. We consider only preconditioning with respect to the stochastic parts of problems, and thus we assume that a suitable preconditioning method or an efficient solver for the underlying deterministic problem is available; see, e.g.,~\cite{ElmanFurnival,Mueller2018Bramble,Subber2014Schwarz}. We formulate an idea of obtaining bounds to the spectra of the preconditioned matrix from the spectrum of small Gram matrices depending solely on the stochastic part of the approximation space. The motivation, however, comes from techniques and tools of the algebraic multilevel preconditioning introduced in~\cite{Eijkhout1991role,Axelsson1996Iterative}. Similar idea was, in a simpler form, used already
in~\cite{Pultarova2016Hierarchical,Pultarova2017BLock}. In the current paper, it is applied in a more general setting, and we believe that the derived technique may lead to an improvement of some other recently introduced estimates, such as \cite{Khan2018Robust,Mueller2018Bramble}. The derived technique is also applicable to systems in the form of multi-term matrix equation (see~\cite[eq.~(1.8)]{PowellSilvesterSim}).

The paper is organized as follows. In \cref{sec:stoch_gal_mat}, we briefly recall the stochastic Galerkin method and the structure of the  matrices $\bm A$ of the resulting systems of linear equations for the tensor product polynomials and complete polynomials. Since the structure of $\bm A$ plays a crucial role in the analysis, theoretical considerations will be accompanied by illustrative examples throughout the paper. \Cref{sec:sp_eq} formulates a general concept of proving spectral equivalence for a broad class of (not only) stochastic Galerkin preconditioners.  In \cref{sec:precond}, we apply this idea to preconditioners which are represented by a special type of block-diagonal (or Schur complement) approximations of $\bm A$, and show how to obtain the spectral bounds of the preconditioned problems from the spectral bounds of small Gram matrices of the corresponding polynomial chaos. We also evaluate those bounds explicitly for the considered polynomial chaoses. Simple numerical examples demonstrating the obtained theoretical outcomes are presented at the end of the section. 

Throughout the paper, we denote by $\kappa(\bm{M}^{-1}\bm{A})$, where $\bm{A}$ and $\bm{M}$ are symmetric positive definite, the spectral condition number of $\bm{M}^{-1}\bm{A}$, i.e., the standard condition number of $\bm{M}^{-\frac{1}{2}}\bm{A}\bm{M}^{-\frac{1}{2}}$ or, in other words, $\lambda_\text{max}(\bm{M}^{-1}\bm{A})/\lambda_\text{min}(\bm{M}^{-1}\bm{A})$. By $e_i$ we denote the $i$-th column of the identity matrix, where its size follows from the context.

\section{Stochastic Galerkin matrices}\label{sec:stoch_gal_mat}

Consider the variational problem of finding $u\in\widetilde{V}=H_0^1(D)\otimes L^2_\rho(\Gamma)$, such that
\begin{equation}\label{eq:weak}
\int_\Gamma\int_D a(\bm{x},\bm{\xi})\nabla u(\bm{x},\bm{\xi})\cdot\nabla v(\bm{x},\bm{\xi})\rho(\bm{\xi})\,{\rm d}\bm{x}\,{\rm d}\bm{\xi}=\int_\Gamma\int_D f(\bm{x})v(\bm{x},\bm{\xi})\rho(\bm{\xi})\,{\rm d}\bm{x}\,{\rm d}\bm{\xi}\quad \text{for all}\; v\in \widetilde{V}
\end{equation}
where $D\subset{\mathbb R}^d$ is a bounded polygonal domain, $d=1,2$ or $3$,
$L^2_\rho(\Gamma)$ is a parametric measure space,
$\Gamma\subset {\mathbb R}^K$, $\Gamma=\prod_{k=1}^K\Gamma_k$, $a\in L^\infty(D)\otimes L^\infty_\rho(\Gamma)$, and $f\in L^2(D)$. 
The gradient is applied only with respect to the 
(physical) variable $\bm{x}\in D$. Let $\bm{\xi}=(\xi_1,\dots,\xi_K)\in \Gamma$, where $\xi_k\in \Gamma_k$ are outcomes of independent random variables with probability densities $\rho_k(\xi_{k})$, $k=1,\dots,K$. The joint probability density is then $\rho=\prod_{k=1}^K\rho_k$. In the following, we consider $\rho_k$ defined on $\mathbb R$ such that $\rho_k(\xi_k)=0$ outside $\Gamma_k$. Thus, instead of $\Gamma_k$ and $\Gamma$, we further write $\mathbb R$ and $\mathbb{R}^K$, respectively. For the convenience of notation, the probability densities are not normalized, see also~\cref{tab:Wiener-Askey}, and we further refer to them as weights.

We assume $a(\bm{x},\bm{\xi})$ in the affine form
\begin{equation}\label{eq:a_expansion}
a(\bm{x},\bm{\xi})=a_0(\bm{x})+\sum_{k=1}^K a_k(\bm{x})\xi_k,
\end{equation}
where $a_k\in L^\infty(D)$, $k=1,\dots,K$.
While it is usually assumed that there exist constants $\underline{a}$ 
and $\overline{a}$ such that
\begin{equation}\label{alfa12}
0<\underline{a}\le a(\bm{x},\bm{\xi})\le \overline{a}<\infty\quad\text{for a.a.}\; \bm{x}\in D,\;
\bm{\xi}\in \Gamma,
\end{equation}
in this paper we consider more general functions $a$. We will only require that the left-hand side of~\cref{eq:weak} defines an inner product on a finite-dimensional approximation space $V\subset\widetilde{V}$; see \cref{sec:pos-def}. This will allow us to use random variables $\xi_k$ with unbounded images and still obtain positive definite system matrices. In other words, we can avoid truncation of supports of distribution functions or any other modification of them. Of course, under such (weaker) condition on $a$, \cref{eq:weak} may not
be well-defined. In this paper we, however, focus only on the discretized problem obtained from~\cref{eq:weak}; see also the discussion in \cite{Powell2009Block}.

We consider discretization using the tensor product
space~\cite{Babuska2010stochastic,Babuska2004Galerkin,Ernst2010Stochastic} of the form $ V=V^{\rm FE}\otimes P\subset\widetilde V$, where $V^{\rm FE}\subset H_0^1(D)$ is 
an $N_{\rm FE}$-dimensional space spanned by the finite-element (FE) functions $\phi_1,\dots,\phi_{N_{\rm FE}}$,
and $P$ is an $N_{\rm P}$-dimensional space spanned by  $K$-variate 
polynomials $\Psi_1$, \dots, $\Psi_{N_{\rm P}}$ 
of variables $\xi_1$, \dots, $\xi_K$.
Denoting the basis functions $\phi_r\Psi_j$ of $V$
by a couple of coordinates $r=1,\dots,N_{\rm FE}$ and $j=1,\dots, N_{\rm P}$, 
we obtain the matrix $\bm A$ of the system of linear equations of the discretized Galerkin problem~\cref{eq:weak} with elements
\begin{eqnarray}
A_{ri,sj}&=&\int_{{\mathbb R}^K}\int_D a(\bm{x},\bm{\xi})\nabla \phi_s(\bm{x})\cdot\nabla \phi_r(\bm{x})\Psi_j(\bm{\xi})\Psi_i(\bm{\xi})\rho(\bm{\xi})
\,{\rm d}\bm{x}\,{\rm d}\bm{\xi}\nonumber\\
&=&\int_D a_0(\bm{x})\nabla \phi_s(\bm{x})\cdot\nabla \phi_r(\bm{x})\,{\rm d}\bm{x}\int_{{\mathbb R}^K}\Psi_j(\bm{\xi})\Psi_i(\bm{\xi})\rho(\bm{\xi})
\,{\rm d}\bm{\xi}\nonumber\\
&&+\sum_{k=1}^K\int_D  a_k(\bm{x})\nabla \phi_s(\bm{x})\cdot\nabla \phi_r(\bm{x})\,{\rm d}\bm{x}\int_{{\mathbb R}^K}\xi_k\Psi_j(\bm{\xi})\Psi_i(\bm{\xi})\rho(\bm{\xi})
\,{\rm d}\bm{\xi}\nonumber\\
&=:&(\bm{F}_0)_{rs}(\bm{G}_0)_{ij}+\sum_{k=1}^K(\bm{F}_k)_{rs}(\bm{G}_k)_{ij},\nonumber
\end{eqnarray}
where for $k=0,1,\dots,K$
\begin{equation}\label{eq:FGelem}
(\bm{F}_k)_{rs}=\int_D  a_k(\bm{x})\nabla \phi_s(\bm{x})\cdot\nabla \phi_r(\bm{x})\,{\rm d}\bm{x}\quad\text{and}\quad
(\bm{G}_k)_{ij}=\int_{{\mathbb R}^K}\xi_k\Psi_j(\bm{\xi})\Psi_i(\bm{\xi})\rho(\bm{\xi})
\,{\rm d}\bm{\xi},
\end{equation}
where we formally set $\xi_0=1$. If the numbering of the basis functions 
$\phi_r\Psi_i$ is anti-lex\-i\-co\-graph\-i\-cal, the structure of $\bm A$ is
\begin{equation}\label{eq:matrixA}
\bm{A}=\sum_{k=0}^K\bm{G}_k\otimes \bm{F}_k.
\end{equation}
In other words, the matrix $\bm A$ is composed of $N_{\rm P}\times N_{\rm P}$ blocks, each of size $N_{\rm FE}\times N_{\rm FE}$.

\begin{example}\label{ex:1}
	Assume $K=1$, the uniform distribution $\rho(\bm{\xi}) = \rho(\xi_1) =\chi_{[ -1,1]}$, and let $\Psi_0(\xi_1)$, $\Psi_1(\xi_1)$ and $\Psi_2(\xi_1)$ be the normalized Legendre orthogonal polynomials of degrees 0, 1, and 2, see \cref{tab:Wiener-Askey}. Then $N_{\rm P}=3$ and
	\begin{equation}\label{eq:ex1}
	\bm{A}=\begin{pmatrix}\bm{F}_0 &\frac{1}{\sqrt{3}}\bm{F}_1 & 0\\
	\frac{1}{\sqrt{3}}\bm{F}_1& \bm{F}_0&\frac{2}{\sqrt{15}}\bm{F}_1 \\
	0&\frac{2}{\sqrt{15}}\bm{F}_1  &\bm{F}_0
	\end{pmatrix}= \bm{I}\otimes\bm{F_0} + \bm{G_1}\otimes \bm{F_1}.
	\end{equation}
\end{example}

\subsection{Approximation spaces and their bases}\label{sec:approx_spaces}

For approximation of the physical part of the solution, we use an $N_{\rm FE}$-dimensional space $V^{\rm FE}$. To approximate the stochastic part of the solution we use the $N_{\rm P}$-dimensional space $P$ of $K$-variate polynomials $\Psi_j(\bm{\xi})=\prod_{k=1}^K\psi_{j_k}^{(k)}(\xi_k)$, $j=1,\dots,N_{\rm P}$. To simplify the notation, we assume that the parameters $\xi_1$, \dots, $\xi_K$ are identically
distributed, i.e., $\rho_1=\dots=\rho_K$. Thus, we omit the superscripts and subscripts $k$ in $\psi_j^{(k)}$ and $\rho_k$, respectively. The extension of the results to polynomial bases with different $\rho_1,\ldots,\rho_K$ is straightforward.

In practice, sets of {\it complete polynomials} (C) or 
{\it tensor product polynomials} (TP)
are usually used; see, e.g., \cite{Ernst2010Stochastic,Powell2009Block}. 
The set of the tensor product polynomials of the degree at most $s_k-1$ in variable $\xi_k$, $k=1,\dots,K$,
is defined as 
\begin{equation}\nonumber
P^{\rm TP}_{s_1,\dots,s_K}=\{ p(\bm{\xi})=\prod_{k=1}^K p_{k}(\xi_k);\; \text{deg}
\,(p_k)\le s_k-1, \; k=1,\dots,K\}\quad\text{and}\quad N_{\rm P}=\prod_{k=1}^Ks_k.
\end{equation}
Let us denote by $V^{\rm TP}_{s_1,\dots,s_K}=
V^{\rm FE}\otimes P^{\rm TP}_{s_1,\dots,s_K}$ the corresponding approximation 
space of~\cref{eq:weak}.
The set of complete polynomials of the maximum total degree $s-1$ is defined as
\begin{equation}\nonumber
P^{\rm C}_s=\{ p(\bm{\xi})=\prod_{k=1}^K p_{k}(\xi_k);\; \sum_{k=1}^K\text{deg}
\,(p_k)\le s-1\}\quad\text{and}\quad N_{\rm P}= \binom{K+s-1}{K}.
\end{equation}
Let us denote by $V^{\rm C}_{s}=V^{\rm FE}\otimes P^{\rm C}_s$ the corresponding approximation space of~\cref{eq:weak}.

For both $P^{\rm TP}_{s_1,\dots,s_K}$ and $P^{\rm C}_{s}$, the bases 
are usually constructed as products of $K$ \emph{classical orthogonal polynomials}. More precisely $\Psi_j(\bm{\xi})=\prod_{k=1}^K\psi_{j_k}(\xi_k)$, $j=1,\dots,N_{\rm P}$, where $\psi_i$ are normalized orthogonal polynomials of the degrees $i=0,1,\dots$, with respect to the weight function $\rho$, i.e.,
\begin{equation}
\int_{\mathbb R}\psi_{i}(\xi)\psi_{j}(\xi)\rho(\xi)\,{\rm d}\xi =\delta_{ij}.
\end{equation}
The $N_{\rm FE}\cdot N_{\rm P}$ basis functions of the discretization space $V$ are then of the form
\begin{equation}\label{eq:order}
\phi_n(\bm{x})\Psi_j(\bm{\xi})=\phi_n(\bm{x})\psi_{j_1}(\xi_1)\dots\psi_{j_K}(\xi_K).
\end{equation}
For the tensor product polynomials, we consider the anti-lexicographical ordering of the basis functions, i.e., the leftmost index ($n$) in \cref{eq:order} is changing the fastest, while the rightmost index ($j_K$) is changing the slowest. For the complete polynomials, we consider ordering by the total degree of the polynomials, going from the smallest to the largest.

Another popular choice of the basis functions of $P$
is a set of \emph{double orthogonal polynomials} \rev{\cite{Babuska2010stochastic,Babuska2004Galerkin,Ernst2010Stochastic}.
If} we use the double orthogonal polynomials as a basis of $P$, 
the matrix $\bm A$ becomes block-diagonal
with the diagonal blocks of the sizes $N_{\rm FE}\times N_{\rm FE}$.
Such  block-diagonal matrix $\bm{A}$ can be also obtained 
by simultaneous diagonalization of all matrices $\bm{G}_k$, see~\cite{Ernst2010Stochastic}.
This diagonal structure of the resulting matrices seems 
favourable for practical computations.
However, the double orthogonal polynomials cannot be used as a basis for
complete polynomials~\cite{Ernst2010Stochastic}. 
Moreover,  for this basis, we cannot obtain methods \rev{for}
a posteriori error 
estimation or adaptivity control in a straightforward way.
In addition, to refine the space $P$, all diagonal blocks of
the matrix $\bm A$
must be \rev{recomputed. Therefore}, in this paper, we only consider 
the classical orthogonal polynomials to construct the bases of $P^{\rm C}$ or
$P^{\rm TP}$.

\subsection{Matrices for classical orthogonal polynomials}
The form of the matrices $\bm{G}_k$, $k = 0,1,\ldots,K$, in~\cref{eq:FGelem} depends on the choice of the basis of  $P^{\rm C}$ or $P^{\rm TP}$ 
and will be important for our future analysis. 
As will be described later, the matrices 
$\bm{G}_k$ can be constructed from (the elements of) a sequence of smaller 
$s\times s$ matrices
\begin{equation}
(\bm{G}_{s,j})_{l+1,m+1}\equiv\int_{\mathbb{R}}\xi^j\psi_l(\xi)
\psi_m(\xi)\rho(\xi)\,{\rm d}\xi, \quad j = 0, 1, \quad l,m = 0,1,\ldots,s-1.
\end{equation}
Let the normalized orthogonal polynomials satisfy the well-known three-term recurrence 
\begin{equation}\label{eq:const_bj_cj}
\sqrt{\beta_{n+1}}\psi_{n+1}(\xi) = (\xi-\alpha_n)\psi_{n}(\xi) - \sqrt{\beta_n}\psi_{n-1}(\xi),\quad n = 1,2,\ldots, \quad \psi_{-1}\equiv 0;
\end{equation} 
then $\bm{G}_{s,0}=\bm{I}_{s}$, where $\bm{I}_{s}$ is the
$s\times s$ identity matrix, and $\bm{G}_{s,1}$ have the form of 
the Jacobi matrix
\begin{equation}\label{eq:G_small}
\bm{G}_{s,1}=\begin{pmatrix}
\alpha_0 & \sqrt{\beta_1}& & \\
\sqrt{\beta_1} & \alpha_1 &\ddots &\\
&\ddots &\ddots&\sqrt{\beta_{s-1}}\\
& & \sqrt{\beta_{s-1}}&\alpha_{s-1}
\end{pmatrix}.
\end{equation}
The eigenvalues of this matrix are given by the roots 
of the polynomial $\psi_{s}$, which are distinct and lie in the support of $\rho$; see, e.g.,~\cite{Gautschi}.  In \cref{tab:Wiener-Askey}, we list the classical  orthogonal polynomials with symmetric statistical distribution considered here together with 
the weight function  $\rho$ corresponding to the non-normalized probability density. Note that due to the symmetry, the diagonal entries of $\bm{G}_{s,1}$ in \cref{eq:G_small} become trivially zero.
These matrices will play a crucial role in deriving spectral bounds, see \cref{sec:precond}. 
\begin{table}[!ht]
	\centering
	{\footnotesize \begin{tabular}{lllllll}
		\hline 
		statistical distribution & weight function  & support& polynomial chaos & $\beta_n$ & $\alpha_n$\\ 
		\hline\hline 
		Gaussian & $e^{-\frac{x^2}{2}}$ &$(-\infty, \infty)$ &  Hermite& $\frac{n}{2}$ & $0$ \\\hline 
		Symmetric Beta & $(1-x^2)^{\gamma-\frac{1}{2}}$ & $[-1,1]$& Gegenbauer  & $\frac{(n+2\gamma -1)n}{(2n-2+2\gamma)(2n+2\gamma)}$& 0 \\ 
		Wigner semicircle & $(1-x^2)^{\frac{1}{2}}$ &  $[-1,1]$& Chebyshev ($2^\text{nd}$ kind) & $\frac{1}{4}$&0 \\ 
		Uniform & $1$ & $[-1,1]$& Legendre  &$\frac{n^2}{(2n-1)(2n+1)}$ & $0$
	\end{tabular}}
	\caption{Wiener--Askey table: symmetric statistical distributions together with the corresponding polynomial chaos (classical orthogonal polynomials) and the three-term recurrence coefficients.}\label{tab:Wiener-Askey}
\end{table}

For the tensor product polynomials, the matrices $\bm{G}_k$, $k=0,1,\dots,K$, are obtained as
\begin{align}\label{eq:AsumGGF}
\bm{G}_0&=\bm{G}_{s_K,0}\otimes \bm{G}_{s_{K-1},0}\otimes \dots \otimes \bm{G}_{2,0}\otimes \bm{G}_{1,0}\\
\bm{G}_1&=\bm{G}_{s_K,0}\otimes \bm{G}_{s_{K-1},0}\otimes \dots \otimes \bm{G}_{2,0}\otimes \bm{G}_{1,1}\\
&\vdots&\\
\bm{G}_K&=\bm{G}_{s_K,1}\otimes \bm{G}_{s_{K-1},0}\otimes \dots \otimes \bm{G}_{2,0}\otimes \bm{G}_{1,0},
\end{align}
see, e.g.,~\cite{Ernst2010Stochastic,Powell2010Preconditioning}. 
\begin{example}\label{ex:2}
	Consider the tensor product Legendre polynomials of two variables $\xi_1$ and $\xi_2$, with $s_1=s_2=3$, then $N_{\rm P}=9$ and the matrix $\bm A$ has the form
	\begin{equation}
	\bm{A}={\footnotesize \left(\begin{array}{ccc|ccc|ccc}
	\bm{F}_0&\frac{1}{\sqrt{3}}\bm{F}_1&0&\frac{1}{\sqrt{3}}\bm{F}_2&0&0&0&0&0\\
	\frac{1}{\sqrt{3}}\bm{F}_1&\bm{F}_0&\frac{2}{\sqrt{15}}\bm{F}_1&0&\frac{1}{\sqrt{3}}\bm{F}_2&0&0&0&0\\
	0&\frac{2}{\sqrt{15}}\bm{F}_1&\bm{F}_0&0&0&\frac{1}{\sqrt{3}}\bm{F}_2&0&0&0\\
	\hline
	\frac{1}{\sqrt{3}}\bm{F}_2&0&0&\bm{F}_0&\frac{1}{\sqrt{3}}\bm{F}_1&0&\frac{2}{\sqrt{15}}\bm{F}_2&0&0\\
	0&\frac{1}{\sqrt{3}}\bm{F}_2&0&\frac{1}{\sqrt{3}}\bm{F}_1&\bm{F}_0&\frac{2}{\sqrt{15}}\bm{F}_1&0&\frac{2}{\sqrt{15}}\bm{F}_2&0\\
	0&0&\frac{1}{\sqrt{3}}\bm{F}_2&0&\frac{2}{\sqrt{15}}\bm{F}_1&\bm{F}_0&0&0&\frac{2}{\sqrt{15}}\bm{F}_2\\
	\hline
	0&0&0&\frac{2}{\sqrt{15}}\bm{F}_2&0&0&\bm{F}_0&\frac{1}{\sqrt{3}}\bm{F}_1&0\\
	0&0&0&0&\frac{2}{\sqrt{15}}\bm{F}_2&0&\frac{1}{\sqrt{3}}\bm{F}_1&\bm{F}_0&\frac{2}{\sqrt{15}}\bm{F}_1\\
	0&0&0&0&0&\frac{2}{\sqrt{15}}\bm{F}_2&0&\frac{2}{\sqrt{15}}\bm{F}_1&\bm{F}_0
	\end{array}\right)}=\sum_{k=0}^2\bm{G}_k\otimes\bm{F}_k,\label{eq:ex_TP_A}\end{equation}
	where the blocks corresponding to the changing degree of the
	approximation polynomials of the variable $\xi_2$ are separated graphically.
\end{example}

For complete polynomials, the matrices $\bm{G}_k$ lose the Kronecker product structure, since $P^\text{C}_s$ is not a tensor product space.
However, since $P^\text{C}_s\subset P^\text{TP}_{s,s,\dots,s}$, each matrix $\bm{G}_k$ is permutation-similar to a submatrix of the matrices in \cref{eq:AsumGGF}, \cite[Lemma 3]{Ernst2010Stochastic}.

\begin{example}\label{ex:3}
	Consider the complete Legendre polynomials of two variables $\xi_1$ and $\xi_2$ and $s=3$, then $N_{\rm P}=6$ and the relevant submatrix of the tensor-product matrix \cref {eq:ex_TP_A} is 
	\begin{equation}\label{eq:ex_TP_C_A}
	\bm{A}={\footnotesize \left(\begin{array}{ccc|ccc|ccc}
	\bm{F}_0&\frac{1}{\sqrt{3}}\bm{F}_1&0&\frac{1}{\sqrt{3}}\bm{F}_2&0&&0&&\\
	\frac{1}{\sqrt{3}}\bm{F}_1&\bm{F}_0&\frac{2}{\sqrt{15}}\bm{F}_1&0&\frac{1}{\sqrt{3}}\bm{F}_2&&0&&\\
	0&\frac{2}{\sqrt{15}}\bm{F}_1&\bm{F}_0&0&0&&0&&\\
	\hline
	\frac{1}{\sqrt{3}}\bm{F}_2&0&0&\bm{F}_0&\frac{1}{\sqrt{3}}\bm{F}_1&&\frac{2}{\sqrt{15}}\bm{F}_2&&\\
	0&\frac{1}{\sqrt{3}}\bm{F}_2&0&\frac{1}{\sqrt{3}}\bm{F}_1&\bm{F}_0&&0&&\\
	&&&&&&\\
	\hline
	0&0&0&\frac{2}{\sqrt{15}}\bm{F}_2&0&&\bm{F}_0&&\\
	&&&&&&&\\
	\hspace*{1cm}&\hspace*{1cm}&\hspace*{1cm}&\hspace*{1cm}&\hspace*{1cm}&\hspace*{1cm}&\hspace*{1cm}&\hspace*{1cm}
	\end{array}\right)}.
	\end{equation}
	Reordering the entries by the total degree of the corresponding polynomial, we obtain
	\begin{equation}
	\bm{A}={\footnotesize \left(\begin{array}{c|cc|ccc}
	\bm{F}_0&\frac{1}{\sqrt{3}}\bm{F}_1&\frac{1}{\sqrt{3}}\bm{F}_2&0&0&0\\
	\hline
	\frac{1}{\sqrt{3}}\bm{F}_1&\bm{F}_0&0&\frac{2}{\sqrt{15}}\bm{F}_1&\frac{1}{\sqrt{3}}\bm{F}_2&0\\
	\frac{1}{\sqrt{3}}\bm{F}_2&0&\bm{F}_0&0&\frac{1}{\sqrt{3}}\bm{F}_1&\frac{2}{\sqrt{15}}\bm{F}_2\\
	\hline
	0&\frac{2}{\sqrt{15}}\bm{F}_1&0&\bm{F}_0&0&0\\
	0&\frac{1}{\sqrt{3}}\bm{F}_2&\frac{1}{\sqrt{3}}\bm{F}_1&0&\bm{F}_0&0\\
	0&0&\frac{2}{\sqrt{15}}\bm{F}_2&0&0&\bm{F}_0\\
	\end{array}\right)}=\sum_{k=0}^2\bm{G}_k\otimes \bm{F}_k,
	\end{equation}
	where the blocks corresponding to the total degrees 0, 1, and 2 are 
	separated graphically. 
\end{example}

\subsection{Positive definiteness}\label{sec:pos-def}
The left-hand side of the equation~\cref{eq:weak} defines the bilinear form
$(\cdot,\cdot)_A$ on $\widetilde V$. We present sufficient conditions on the function $a$, under which $(\cdot,\cdot)_A$ becomes
an inner product (called energy inner product; see, e.g., \cite{Bespalov2014Energy,Eigel2016Local,EigelPfeifer2017}) on the finite-dimensional space $V$. To achieve positive definiteness of the bilinear form $(\cdot,\cdot)_A$, 
we need to assume some dominance of the deterministic part $a_0(\bm{x})$ over the stochastic part $a_k(\bm{x})\xi_k$, $k = 1,\ldots,K$. In this paper, we will assume that there exists a constant $\overline{\mu}\ge 0$ such that
\begin{equation}\label{eq:omega_max}
\sum_{k=1}^K\vert a_k(\bm{x})\vert\le \overline{\mu}\,a_0(\bm{x}), \quad \text{for a.a. }\bm{x}\in D,
\end{equation}
where the particular choice of \,$\overline{\mu}$\, depends on the weight $\rho(\xi)$. For the Beta  distribution on $[ -1,1]$, is suffices to take $\overline{\mu}=1$, while for the Gauss distribution, we take $\overline{\mu}=(2(s_1+\cdots+s_K-K))^{-\frac{1}{2}}$ for tensor product polynomials and $\overline{\mu}=(2s-2)^{-\frac{1}{2}}$ for complete polynomials.\footnote{Since the eigenvalues of matrix $\bm{G}_{s,1}$ are the \rev{zeros} of the Hermite polynomials $\psi_{s}$ and thus lie in the interval $\left\langle -\sqrt{\frac{2(s-1)^2}{s+2}}, \sqrt{\frac{2(s-1)^2}{s+2}} \right\rangle$ \cite[p.120]{Szego1975Orthogonal}, the eigenvalues of $(2s-2)^\frac{1}{2}\,\bm{G}_{s,0}+\bm{G}_{s,1}$ are strictly positive.} Note that this choice of $\overline{\mu}$ also trivially implies that $\bm{G}_{s,0}+\overline{\mu}\,\bm{G}_{s,1}$ is positive definite. For further discussion on bounds of Hermite and \rev{Legendre} 
polynomials see, e.g., \cite{Powell2009Block}.

We emphasize that the assumption~\cref{eq:omega_max} is weaker than the classical assumption
widely used to obtain spectral estimates, e.g.,
\begin{equation}\label{eq:stronger_assum}
\sum_{k=1}^K\Vert a_k(\bm{x})\Vert_{L^\infty(D)}\le \mu_{\rm class}\,\underset{\bm{x}\in D}{\operatorname{ess\, inf}}\, a_0(\bm{x})
\end{equation}
for uniform distribution; see~\cite{Ernst2009Efficient,Khan2018Robust,Mueller2018Bramble,Powell2009Block,Ullmann2010Kronecker}. The main difference between~\cref{eq:omega_max} and \cref{eq:stronger_assum} is that the former is considered point-wise, while the latter uses the norms of $a_k$ over $D$. 
The condition~\cref{eq:omega_max} allows us to obtain not only more accurate two-sided guaranteed bounds to the spectra, but these bounds also apply to parameter distribution and functions $a_k$ for which no estimate could be obtained using the standard approach; see \cref{sec:num_exp}.  
Assumption \cref{eq:omega_max} is sufficient to achieve positive definiteness of $\bm A$. In some applications, we can assume a stronger dominance of $a_0$, i.e.,
\begin{equation}\label{eq:omega}
\sum_{k=1}^K\vert a_k(\bm{x})\vert\le\mu\, a_0(\bm{x}), \quad \text{for a.a. }\bm{x}\in D, \quad 0 \leq \mu\leq\overline{\mu}.
\end{equation}
The smaller the $\mu$, the more favourable spectral bounds of the matrices $\bm{A}$ and of the preconditioned matrices $\bm{M}^{-1}\bm{A}$ are generally achieved. We will further assume that $\mu$ is the smallest number for which \cref{eq:omega} is satisfied.

\section{Proving spectral equivalence of inner products on \texorpdfstring{{\boldmath$V$}}{V}}\label{sec:sp_eq}

We consider preconditioning methods based on inner products that are spectrally equivalent to the energy inner product $(\cdot,\cdot)_A$ on $V$, but are represented by matrices with more favourable non-zero structures such
as, for example, block-diagonal matrices. We base our approach on a
splitting of the inner products to subdomains (\cref{th:lem1}) and on a
preconditioning of a tensor product matrix (\cref{th:lem2}). 

Let $D$ be partitioned into arbitrary non-overlapping elements (subdomains) $\tau_j$, \linebreak[4]
$j = 1,\ldots,N_\text{elem}$.
Consider the following decomposition of $\bm A$ from \cref{eq:matrixA}
\begin{equation}
\bm{A}=\sum_{k=0}^K\bm{G}_k\otimes\bm{F}_k=\sum_{j=1}^{N_{\rm elem}}\sum_{k=0}^K
\bm{G}_k\otimes\bm{F}_k^{(j)} =: \sum_{j=1}^{N_{\rm elem}}\bm{A}^{(j)},
\end{equation}
where
\begin{equation}
(\bm{F}^{(j)}_k)_{rs}=\int_{\tau_j}a_k(\bm{x})\nabla \phi_s(\bm{x})\cdot
\nabla \phi_r(\bm{x})\,{\rm d}\bm{x} \qquad \text{and} \qquad \bm{A}^{(j)}=\sum_{k=0}^K\bm{G}_k\otimes\bm{F}_k^{(j)}.
\end{equation}

\begin{assumption}\label{ass:1}
	We further assume that the functions
	$a_k(\bm{x})$, $k=0,1,\dots,K$, (and thus the function $a(\bm{x},\bm{\xi})$)
	are constant on every element (subdomain) $\tau_j$, $j=1,\dots,N_{\rm elem}$. We define 
	\begin{equation}\label{eq:const_a}
	a_k^{(j)} \equiv a_k(\bm{x}), \quad \bm{x}\in\tau_j,\quad
	j=1,\dots,N_{\rm elem}.
	\end{equation}
	If $a_k(\bm{x})$ are not constant on elements, we would assume a stronger, element-wise, dominance of $a_0(\bm{x})$ over $a_k(\bm{x})$, i.e.,
	\begin{equation}\label{eq:omega_not_const_a}
	\sum_{k=1}^K\underset{\bm{x}\in\tau_j}{\operatorname{ess\, sup}}\,\vert a_k(\bm{x})\vert\le 
	\mu\,\underset{\bm{x}\in\tau_j}{\operatorname{ess\, inf}}\,a_0(\bm{x}),\quad j=1,\dots,N_{\rm elem},
	\end{equation}
	instead of \cref{eq:omega}, which would result in a slight modification of the spectral estimates
	derived in subsequent sections. To simplify the presentation, we do not describe these modifications in more detail.
\end{assumption} 

Using \cref{eq:const_a}, we obtain
\begin{equation}\label{eq:FGelem_i}
(\bm{F}^{(j)}_k)_{rs}=\int_{\tau_j}a_k(\bm{x})\nabla \phi_{s}(\bm{x})\cdot
\nabla \phi_{r}(\bm{x})\,{\rm d}\bm{x} = a_k^{(j)}\int_{\tau_j}\nabla \phi_{s}(\bm{x})\cdot
\nabla \phi_{r}(\bm{x})\,{\rm d}\bm{x} =: a_k^{(j)}(\bm{F}^{(j)})_{rs}.
\end{equation}
Therefore, we can write
\begin{equation}
\bm{A}^{(j)}=\sum_{k=0}^K\bm{G}_k\otimes\bm{F}_k^{(j)}=\sum_{k=0}^K\bm{G}_k\otimes a_k^{(j)}\bm{F}^{(j)} =\left(\sum_{k=0}^Ka_k^{(j)}\bm{G}_k\right)\otimes \bm{F}^{(j)}, 
\end{equation}
which gives
\begin{equation}\label{eq:A_final}
\bm{A}=\sum_{j=1}^{N_{\rm elem}}\bm{A}^{(j)}=\sum_{j=1}^{N_{\rm elem}}\left(\sum_{k=0}^Ka_k^{(j)}\bm{G}_k\right)\otimes \bm{F}^{(j)}.
\end{equation}
In other words, we obtained a decomposition of $\bm A$ in which the dependence of the FE matrices on $a(\bm{x})$ is compensated by splitting of the operator to elements.

In this paper, we consider preconditioners corresponding to an inner product $(\cdot,\cdot)_M$ defined on $V$ whose matrix representation (with respect to the same basis) is of the form analogous to \cref{eq:A_final}, in particular
\begin{align}\label{eq:matrixB}
\bm{M}=\sum_{k=0}^K\widetilde{\bm{G}}_k\otimes\widetilde{\bm{F}}_k&=\sum_{j=1}^{N_{\rm elem}}\sum_{k=0}^K
\widetilde{\bm{G}}_k\otimes\widetilde{\bm{F}}_k^{(j)} =\sum_{j=1}^{N_{\rm elem}}\sum_{k=0}^K
\widetilde{\bm{G}}_k\otimes \widetilde{a}_k^{(j)}\bm{F}^{(j)}\\
&=\sum_{j=1}^{N_{\rm elem}}\left(\sum_{k=0}^K
\widetilde{a}_k^{(j)}\widetilde{\bm{G}}_k\right)\otimes\bm{F}^{(j)} =: \sum_{j=1}^{N_{\rm elem}}\bm{M}^{(j)},
\end{align}
where $\widetilde{a}_k^{(j)}$ and $\widetilde{\bm{G}}_k\in\mathbb{R}^{N_{\rm P}\times N_{\rm P}}$ are such that the matrices $\bm{M}^{(j)}$ are positive semidefinite for all $j = 1,\ldots, N_\text{elem}$\rev{, and the resulting matrix $\bm{M}$ is positive-definite}. 

\rev{Note that while formally the structure of $\bm{M}$ is the same as that of the original matrix,  special choices of $\widetilde{\bm{G}}_k$, $k=0,\ldots,K$, can simplify the solves with $\bm{M}$ greatly, in comparison with the solves with $\bm{A}$.} We will see \rev{in \cref{sec:precond}} 
that many of the preconditioners that are used in practice are indeed of the form \cref{eq:matrixB}. \rev{Recall that since the preconditioner only differs from $\bm{A}$ in the stochastic part, we have to have an efficient solver for the underlying deterministic problem.}

The following theorem shows that the spectral equivalence between $\bm{A}$ and $\bm{M}$ can be obtained from the spectral equivalence between $\sum_{k=0}^Ka_k^{(j)}\bm{G}_k$ and $\sum_{k=0}^K\widetilde{a}_k^{(j)}\widetilde{\bm{G}}_k$ on each element
$\tau_j$, $j=1,\dots,N_{\rm elem}$. The obtained spectral bounds do not depend on the type and the number of the FE basis functions.

\begin{theorem}\label{th:1}
	Let the matrices $\bm{A}$ and $\bm{M}$ be defined by~\cref{eq:A_final} and~\cref{eq:matrixB}, respectively, and let $0<\underline{c}\le\overline{c}$  be such that 	\begin{equation}\label{eq:equiv_G}
	\underline{c}\,\bm{v}^T\left(\sum_{k=0}^K\widetilde{a}_k^{(j)}
	\widetilde{\bm{G}}_k\right)\bm{v}\le
	\bm{v}^T\left(\sum_{k=0}^Ka_k^{(j)}\bm{G}_k\right)\bm{v}\le \overline{c}\,
	\bm{v}^T\left(\sum_{k=0}^K\widetilde{a}_k^{(j)}
	\widetilde{\bm{G}}_k\right)\bm{v},\quad \text{for all }\;\bm{v}\in\mathbb{R}^{N_{\rm P}},
	\end{equation}
	$j =1,\ldots,N_{\rm elem}$. Then also
	\begin{equation}\label{eq:const_beta}
	\underline{c}\,\bm{v}^T\bm{Mv}\le \bm{v}^T\bm{Av}\le \overline{c}\,\bm{v}^T\bm{Mv},\quad \text{for all}\ \bm{v}\in\mathbb{R}^{N_{\rm FE}N_{\rm P}}.
	\end{equation}
\end{theorem}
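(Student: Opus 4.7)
The plan is to reduce the global spectral inequality \cref{eq:const_beta} to the local ones \cref{eq:equiv_G} by exploiting the common Kronecker factor $\bm{F}^{(j)}$ in the decompositions \cref{eq:A_final} and \cref{eq:matrixB}. First I would restate \cref{eq:const_beta} in Loewner-order form as the assertion that the two matrices
\[
\bm{A} - \underline{c}\,\bm{M} \quad\text{and}\quad \overline{c}\,\bm{M} - \bm{A}
\]
are positive semidefinite, and symmetrically restate \cref{eq:equiv_G} as saying that
\[
\bm{S}_j^{-} := \Bigl(\sum_{k=0}^{K} a_k^{(j)}\bm{G}_k\Bigr) - \underline{c}\Bigl(\sum_{k=0}^{K} \widetilde{a}_k^{(j)}\widetilde{\bm{G}}_k\Bigr), \qquad \bm{S}_j^{+} := \overline{c}\Bigl(\sum_{k=0}^{K} \widetilde{a}_k^{(j)}\widetilde{\bm{G}}_k\Bigr) - \Bigl(\sum_{k=0}^{K} a_k^{(j)}\bm{G}_k\Bigr)
\]
are positive semidefinite for every $j = 1,\ldots,N_{\rm elem}$.

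Next, I would invoke the elementary fact that $\bm{F}^{(j)}$ is symmetric positive semidefinite, since by its definition $(\bm{F}^{(j)})_{rs}=\int_{\tau_j}\nabla\phi_s\cdot\nabla\phi_r\,{\rm d}\bm{x}$ is the Gram matrix of the gradients of the FE basis functions restricted to $\tau_j$. Combined with the standard property that the Kronecker product of two symmetric positive semidefinite matrices is itself symmetric positive semidefinite, this yields $\bm{S}_j^{\pm}\otimes\bm{F}^{(j)}\succeq 0$ for every $j$. Using the bilinearity of the Kronecker product and the expressions \cref{eq:A_final}, \cref{eq:matrixB}, this translates directly into
\[
\bm{A}^{(j)} - \underline{c}\,\bm{M}^{(j)} = \bm{S}_j^{-}\otimes\bm{F}^{(j)} \succeq 0, \qquad \overline{c}\,\bm{M}^{(j)} - \bm{A}^{(j)} = \bm{S}_j^{+}\otimes\bm{F}^{(j)} \succeq 0,
\]
for each element $\tau_j$.

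Finally, summing these element-wise inequalities over $j = 1,\ldots,N_{\rm elem}$ and using the decompositions $\bm{A}=\sum_j\bm{A}^{(j)}$ and $\bm{M}=\sum_j\bm{M}^{(j)}$ gives $\bm{A}-\underline{c}\,\bm{M}\succeq 0$ and $\overline{c}\,\bm{M}-\bm{A}\succeq 0$, which is exactly \cref{eq:const_beta}. No step is particularly delicate; the only subtlety to note is that the individual stochastic combinations $\sum_{k=0}^K a_k^{(j)}\bm{G}_k$ need not be positive semidefinite themselves (the weights $a_k^{(j)}$ may have arbitrary signs), so one must use the difference matrices $\bm{S}_j^{\pm}$ rather than the summands individually when invoking positivity of the Kronecker product. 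Nothing else beyond Loewner monotonicity of $\cdot\otimes\bm{F}^{(j)}$ and additivity of the Loewner order is required.
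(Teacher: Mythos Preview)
Your proposal is correct and takes essentially the same approach as the paper: lift the local stochastic inequalities \cref{eq:equiv_G} to the element matrices via the positive semidefinite Kronecker factor $\bm{F}^{(j)}$, then sum over $j$. The paper packages these two steps as separate lemmas (one on Kronecker products preserving spectral equivalence, one on summing seminorms), whereas you work directly in Loewner-order form with the difference matrices $\bm{S}_j^{\pm}$; this is a cosmetic repackaging, with the minor bonus that your version does not need to invoke positive definiteness of the individual stochastic combinations $\sum_k a_k^{(j)}\bm{G}_k$.
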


The proof of \cref{th:1} is based on the two following lemmas.

\begin{lemma}\label{th:lem1}
	Let  $(\cdot,\cdot)_A$ and $(\cdot,\cdot)_M$ be two inner products on a Hilbert space $V$. Let the inner products be composed as
	\begin{equation}\label{eq:local_inprod_assumption}
	(u,v)_A=\sum_{j=1}^N(u,v)_{A,j},\quad (u,v)_M=\sum_{j=1}^N(u,v)_{M,j}, 
	\quad u,v\in V,
	\end{equation}
	where $(\cdot,\cdot)_{A,j}$ and $(\cdot,\cdot)_{M,j}$, $j=1,\ldots,N$, are positive semidefinite bilinear forms on $V$. Let there exist two positive real constants $\underline{c}$ and $\overline{c}$ such that the induced seminorms are uniformly equivalent in the following sense
	\begin{equation}\label{eq:ineq_elem}
	\underline{c}\,(u,u)_{M,j}\le (u,u)_{A,j}\le \overline{c}\,(u,u)_{M,j},\quad
	\text{for all}\; u\in V, \quad j = 1,\ldots,N.
	\end{equation}
	Then the induced (cumulative) norms are also equivalent with the same constants, i.e.,
	\begin{equation}\label{eq:ineq1}
	0<\underline{c}\le\frac{(u,u)_{A}}{(u,u)_{M}}\le\overline{c},\quad\text{for all}\; u\in V,\, u\neq0.
	\end{equation}
\end{lemma}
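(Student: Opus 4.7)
The plan is to reduce the claim to a purely arithmetic observation: a sum of nonnegative quantities bounded term-by-term inherits the same two-sided bound. Since the decomposition in \cref{eq:local_inprod_assumption} is a linear (additive) splitting of both inner products into the corresponding semi-definite pieces, the result should follow by summing \cref{eq:ineq_elem} over the index $j$.

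First, I would fix an arbitrary $u\in V$ with $u\neq 0$ and apply \cref{eq:ineq_elem} for each $j=1,\dots,N$; since each term $(u,u)_{M,j}\ge 0$ (positive semidefiniteness), multiplying by the positive scalar $\underline{c}$ (resp.\ $\overline{c}$) preserves the inequality. Summing the $N$ inequalities and using the additive decompositions \cref{eq:local_inprod_assumption} gives at once
\begin{equation*}
\underline{c}\,(u,u)_M \;=\; \underline{c}\sum_{j=1}^N (u,u)_{M,j}\;\le\;\sum_{j=1}^N (u,u)_{A,j}\;=\;(u,u)_A\;\le\;\overline{c}\sum_{j=1}^N (u,u)_{M,j}\;=\;\overline{c}\,(u,u)_M.
\end{equation*}

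Second, I would observe that since $(\cdot,\cdot)_M$ is assumed to be an inner product on $V$, we have $(u,u)_M>0$ whenever $u\neq 0$, so division by $(u,u)_M$ is legitimate and yields exactly \cref{eq:ineq1}. The hypothesis $0<\underline{c}\le\overline{c}$ ensures that the left-hand bound is strictly positive, so the ratio is genuinely bounded away from zero; in particular this also re-confirms that $(\cdot,\cdot)_A$ is an inner product (nondegenerate), consistent with its status in the statement.

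I do not anticipate any real obstacle here; the lemma is a direct consequence of the linearity of the splitting together with nonnegativity of each seminorm term. The only point requiring a brief comment is the legitimacy of writing the ratio in \cref{eq:ineq1}, which relies on $(u,u)_M>0$ for $u\neq 0$; this uses the fact that $(\cdot,\cdot)_M$ is assumed to be a genuine inner product on $V$, not merely a sum of semidefinite forms.
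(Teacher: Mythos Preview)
Your proof is correct and follows essentially the same route as the paper: sum the elementwise inequalities \cref{eq:ineq_elem} over $j$ and invoke the additive decompositions \cref{eq:local_inprod_assumption} to obtain $\underline{c}\,(u,u)_M\le(u,u)_A\le\overline{c}\,(u,u)_M$. Your added remark that $(u,u)_M>0$ for $u\neq 0$ legitimizes the division is a small clarification the paper leaves implicit.
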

\begin{proof}
	The proof follows trivially from:
    \begin{align}
	\underline{c}\,(u,u)_{M}\overset{\cref{eq:local_inprod_assumption}}{=}\underline{c}\,\sum_{j=j}^N(u,u)_{M,j}&\overset{\cref{eq:ineq_elem}}{\le}\ \overbrace{\sum_{j=1}^N(u,u)_{A,j}}^{\overset{\cref{eq:local_inprod_assumption}}{=}(u,u)_{A}}\ \overset{\cref{eq:ineq_elem}}{\le} \overline{c}\,\sum_{j=1}^N(u,u)_{M,j}\overset{\cref{eq:local_inprod_assumption}}{=}\overline{c}\,(u,u)_{M}.
	\end{align}
\end{proof}

\begin{remark}
	\cref{th:lem1} can also be formulated in terms of matrices:
	If $\bm{A}=\sum_j\bm{A}^{(j)}$, $\bm{M}=\sum_j\bm{M}^{(j)}$ and
	$\underline{c}\,\bm{v}^T\bm{M}^{(j)}\bm{v}\le\bm{v}^T\bm{A}^{(j)}\bm{v}
	\le\overline{c}\,\bm{v}^T\bm{M}^{(j)}\bm{v}$ for all $\bm{v}$ and $j$,
	then  $\underline{c}\,\bm{v}^T\bm{M}\bm{v}\le\bm{v}^T\bm{A}\bm{v}
	\le\overline{c}\,\bm{v}^T\bm{M}\bm{v}$ for all $\bm{v}$.
\end{remark}

\begin{lemma}\label{th:lem2}
	Let $\bm{X},\bm{Y}\in \mathbb{R}^{N_1\times N_1}$ be symmetric positive definite and $\bm{Z}\in\mathbb{R}^{N_2\times N_2}$ be symmetric positive semidefinite.
	Let  
	\begin{equation}\underline{c}\,\bm{v}^T\bm{Y}\bm{v}\le \bm{v}^T\bm{X}\bm{v}\le \overline{c}\,\bm{v}^T\bm{Y}\bm{v},\quad
	\text{for all}\; \bm{v}\in \mathbb{R}^{N_1},
	\end{equation}
	hold for some positive real constants $\underline{c}$ and $\overline{c}$. Then also
	\begin{equation}\underline{c}\,\bm{u}^T(\bm{Y}\otimes\bm{Z})\bm{u}\le \bm{u}^T(\bm{X}\otimes\bm{Z})\bm{u}\le \overline{c}\,\bm{u}^T(\bm{Y}\otimes\bm{Z})\bm{u},\quad
	\text{for all}\; \bm{u}\in \mathbb{R}^{N_1N_2}.
	\end{equation}
\end{lemma}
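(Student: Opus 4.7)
The plan is to reduce the tensor-product statement to the hypothesized inequality on $\mathbb{R}^{N_1}$ by diagonalizing the second factor. Since $\bm{Z}$ is symmetric positive semidefinite, I would use its spectral decomposition $\bm{Z} = \sum_{i=1}^{N_2} d_i\,q_i q_i^T$ with $d_i\ge 0$ and $\{q_i\}$ an orthonormal eigenbasis of $\mathbb{R}^{N_2}$, and expand by bilinearity of the Kronecker product: $\bm{X}\otimes\bm{Z} = \sum_i d_i\,(\bm{X}\otimes q_i q_i^T)$, and analogously for $\bm{Y}$. The key observation is that each rank-one summand in this expansion turns into an $\bm{X}$- or $\bm{Y}$-quadratic form on a single vector in $\mathbb{R}^{N_1}$, to which the hypothesis is immediately applicable.

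Concretely, for a fixed $\bm{u}\in\mathbb{R}^{N_1 N_2}$ I would use the mixed-product property to factor
\[
\bm{X}\otimes q_i q_i^T = (I_{N_1}\otimes q_i)\,\bm{X}\,(I_{N_1}\otimes q_i^T),
\]
and set $\bm{v}_i := (I_{N_1}\otimes q_i^T)\bm{u}\in\mathbb{R}^{N_1}$. Then $\bm{u}^T(\bm{X}\otimes q_i q_i^T)\bm{u} = \bm{v}_i^T\bm{X}\bm{v}_i$, and similarly $\bm{u}^T(\bm{Y}\otimes q_i q_i^T)\bm{u} = \bm{v}_i^T\bm{Y}\bm{v}_i$. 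Applying the hypothesis to each $\bm{v}_i$, multiplying the resulting two-sided inequality by the non-negative weight $d_i$, and summing over $i$ reassembles the quadratic forms $\bm{u}^T(\bm{X}\otimes\bm{Z})\bm{u}$ and $\bm{u}^T(\bm{Y}\otimes\bm{Z})\bm{u}$ on the two sides, which is exactly the desired conclusion.

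The argument is essentially bookkeeping, so I do not anticipate a real obstacle. The two items that need careful verification are the Kronecker factorization of the rank-one terms (a dimension-check using the mixed-product rule) and the sign control in the scaling step, which is exactly where the positive semidefiniteness of $\bm{Z}$ is used, via $d_i\ge 0$. Positive definiteness of $\bm{X}$ and $\bm{Y}$ itself plays no role in the proof beyond making the input inequality meaningful. An equivalent route would be to reshape $\bm{u}$ into a matrix and rewrite the quadratic forms as traces involving $\bm{X}$, $\bm{Y}$, and $\bm{Z}$, but the spectral route above keeps the argument closest in spirit to \cref{th:lem1}, which is already available to close the summation step.
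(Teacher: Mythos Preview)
Your argument is correct, but it is not the route the paper takes. The paper argues algebraically via the Kronecker inverse identity: when $\bm{Z}$ is invertible one has $(\bm{Y}\otimes\bm{Z})^{-1}(\bm{X}\otimes\bm{Z}) = (\bm{Y}^{-1}\bm{X})\otimes\bm{I}$, whose spectrum coincides with that of $\bm{Y}^{-1}\bm{X}$, so the bounds are immediate; the singular case is then handled separately by restricting to the range of $\bm{I}\otimes\bm{Z}^{\dagger}\bm{Z}$ (where both tensor products act invertibly) and noting that both quadratic forms vanish on the complement. Your spectral-decomposition approach is more elementary and, in one respect, cleaner: it treats the positive-semidefinite $\bm{Z}$ uniformly without any invertible/singular case split and without pseudoinverses, and it makes transparent that only $d_i\ge 0$ is used. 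The paper's approach, on the other hand, yields the sharper statement that the nontrivial part of the generalized spectrum is exactly $\sigma(\bm{Y}^{-1}\bm{X})$, not merely contained in $[\underline{c},\overline{c}]$, which is slightly more information than your summation argument delivers.
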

\begin{proof}
	If $\bm{Z}$ is invertible, then the proof follows trivially from
	\begin{equation}\label{eq:kron_invert}
	(\bm{Y}\otimes \bm{Z})^{-1}(\bm{X}\otimes \bm{Z}) = (\bm{Y}^{-1}\otimes \bm{Z}^{-1})(\bm{X}\otimes \bm{Z}) = (\bm{Y}^{-1}\bm{X})\otimes (\bm{Z}^{-1}\bm{Z}) = (\bm{Y}^{-1}\bm{X})\otimes I,
	\end{equation}
	see, e.g., \cite[Section~13.3]{SIAM_Laub} or \cite[Section~4.1]{Powell2010Preconditioning}, and the fact that the spectra satisfy $\sigma((\bm{Y}^{-1}\bm{X})\otimes I)=\sigma(\bm{Y}^{-1}\bm{X})$.
	If $\bm{Z}$ is singular, then $\bm{X}\otimes \bm{Z}$ as well as $\bm{Y}\otimes \bm{Z}$ are invertible on $\mathcal{R}(I\otimes(\bm{Z}^\dagger \bm{Z}))$ and zero on $\mathcal{R}(I\otimes(I - \bm{Z}^\dagger \bm{Z}))$, where $^\dagger$ denotes the Moore-Penrose pseudoinverse. Combined with \cref{eq:kron_invert}, the proof follows directly.
\end{proof}

We are now ready to prove \cref{th:1}.

\begin{proof}[Proof of \cref{th:1}]
	Under the assumption \cref{eq:omega_max} and the assumption on the preconditioning matrix, the matrices $\sum_{k=0}^Ka_k^{(j)}\bm{G}_k$ and $\sum_{k=0}^K\widetilde{a}_k^{(j)}\widetilde{\bm{G}}_k$, $j = 1,\ldots, N_\text{elem}$, are positive definite, while $\bm{F}^{(j)}$, $j = 1,\ldots, N_\text{elem}$, are positive semidefinite. From \cref{eq:equiv_G} and \cref{th:lem2}, we obtain uniform spectral equivalence between $\bm{A}^{(j)}$ and $\bm{M}^{(j)}$ on each element $\tau_j$. Applying \cref{th:lem1} to the seminorms defined by $\bm{A}^{(j)}$ and $\bm{M}^{(j)}$ finishes the proof.
\end{proof}

Let us demonstrate on an example how $\underline{c}$ and $\overline{c}$ are obtained using \cref{th:1}.
\begin{example}\label{ex:4}
	Let $\bm{A}$ be the matrix from \cref{ex:1}
	and let the preconditioner $\bm{M}$ be block-diagonal, i.e.,
	\begin{equation}
	\bm{M}= \sum_{k=0}^K\widetilde{\bm{G}}_k\otimes\bm{F}_k := \bm{G}_0\otimes \bm{F}_0.
	\end{equation}
	For the element $\tau_{j}$, $j=1,\dots,N_{\rm elem}$, we have 
	\begin{equation}\label{eq:ex2}
	\sum_{k=0}^Ka_k^{(j)}\bm{G}_k=\left(\begin{array}{ccc}
	a_0^{(j)} &\frac{1}{\sqrt{3}}a_1^{(j)} & 0\\
	\frac{1}{\sqrt{3}}a_1^{(j)}& a_0^{(j)}&\frac{2}{\sqrt{15}}a_1^{(j)} \\
	0&\frac{2}{\sqrt{15}}a_1^{(j)} &a_0^{(j)} \\
	\end{array}\right)\quad \text{and} \quad
	\sum_{k=0}^K\widetilde{a}_k^{(j)}\widetilde{\bm{G}}_k = \left(\begin{array}{ccc}
	a_0^{(j)} &0 & 0\\
	0& a_0^{(j)}&0 \\
	0&0  &a_0^{(j)} \\
	\end{array}\right).
	\end{equation}
	For $\vert a_1^{(j)}\vert\le  a_0^{(j)}$, $j = 1,\ldots,N_\text{elem}$, it is easy to prove that these matrices 
	are spectrally equivalent with 
	\begin{equation}
	\underline{c}=1-\frac{\sqrt{15}}{5}\quad\text{and}\quad
	\overline{c}= 1+\frac{\sqrt{15}}{5}.
	\end{equation}
	Thus, using \cref{th:1}, also
	\begin{equation}
	\left(1-\frac{\sqrt{15}}{5}\right)\bm{v}^T\bm{M}\bm{v}\le \bm{v}^T\bm{A}\bm{v}\le \left(1+\frac{\sqrt{15}}{5}\right)\bm{v}^T\bm{M}\bm{v}, \quad \text{for all}\ \bm{v}\in\mathbb{R}^{N_{\rm FE}N_{\rm P}},
	\end{equation}
	and
	\begin{equation}\label{eq:ex_cond}
	\kappa(\bm{M}^{-1}\bm{A})\le 4+\sqrt{15}\approx 7.87.
	\end{equation}
	In other words, if $\vert a_1(\bm{x})\vert\le a_0(\bm{x})$, i.e.,~$\mu=1$ in~\cref{eq:omega}, the block-diagonal preconditioning of $\bm A$ from \cref{eq:ex1} yields the condition number \cref{eq:ex_cond}. If $\vert a_1(\bm{x})\vert\le \frac{1}{2}a_0(\bm{x})$, i.e.,~$\mu=\frac{1}{2}$ in~\cref{eq:omega},
	we analogously get 
	\begin{equation}\label{eq:226}
	\kappa(\bm{M}^{-1}\bm{A})\le \frac{23+4\sqrt{15}}{17}
	\approx 2.26.
	\end{equation}
	Using the classical assumption~\cref{eq:stronger_assum} and not employing the information about the spectrum of $\bm{G}_1$, we obtain
	for $\vert a_1(\bm{x})\vert\le\frac{1}{2} a_0(\bm{x})$ the estimate $\kappa(\bm{M}^{-1}\bm{A})\le 3$, while for $\vert a_1(\bm{x})\vert\le a_0(\bm{x})$, the term $\kappa(\bm{M}^{-1}\bm{A})$ cannot be \rev{bounded. In}~\cite{Powell2009Block}, one of the pioneering papers on spectral estimates of preconditioned stochastic Galerkin matrices, the spectral bounds for specific 
	$s$ and $K$ were derived, which for $K=1$,
	${\mu}=1$ and ${\mu}=\frac{1}{2}$ result in~\cref{eq:ex_cond} and~\cref{eq:226}, respectively. 
	These bounds, however, do not reflect
	possible local properties of functions $a_k$, and thus for $K\ge 2$,
	they are still less accurate than our estimates, as will be shown in \cref{sec:num_exp}.
\end{example}

Note that the approach for obtaining spectral equivalence of the stochastic Galerkin matrices presented in this section and summarized in \cref{th:1} is independent of the choice of the approximation spaces. In the following section, we apply these results to approximation spaces introduced in \cref{sec:approx_spaces} and some inner products of the form \cref{eq:matrixB} defined on them to obtain new spectral bounds of the related preconditioned system matrix $\bm{M}^{-1}\bm{A}$.

\section{Preconditioning and spectral bounds}\label{sec:precond}

In the following part, we present three inner products
$(\cdot,\cdot)_M$ on $V$ and their matrix representations $\bm{M}$
that can serve as preconditioners for $\bm{A}$. For each of them, we compute constants $\underline{c}$ and $\overline{c}$ defined in~\cref{eq:const_beta}, which bound the spectrum of the preconditioned matrices $\bm{M}^{-1}\bm{A}$. 

The inner products and the corresponding matrices $\bm{M}$ presented in this section should only serve as examples. Other preconditioning matrices of the form \cref{eq:matrixB} can be studied and corresponding bounds to the resulting spectra can be derived analogously.

\subsection{Mean-based preconditioning}\label{sec:mean-based}
Due to~\cref{eq:omega}, we can approximate the original inner product by an inner product, where the function $a(\bm{x},\bm{\xi})$ is substituted by $a_0(\bm{x})$, representing for centralized distributions of $\bm{\xi}$ the mean value of $a(\bm{x},\bm{\xi})$. The \emph{mean-based inner product} is defined as
\begin{equation}
(u,v)_{M}=\int_{{\mathbb R}^K}\int_D a_0(\bm{x})\nabla u(\bm{x},\bm{\xi})\cdot
\nabla v(\bm{x},\bm{\xi})\rho(\bm{\xi})
\,{\rm d}\bm{x}\,{\rm d}\bm{\xi}.
\end{equation}
The corresponding preconditioning matrix
\begin{equation}\label{eq:mean_based_B}
\bm{M}=\bm{G}_0\otimes\bm{F}_0
\end{equation}
is then block-diagonal with the diagonal blocks of size $N_{\rm FE}\times N_{\rm FE}$.

This type of preconditioning can be used both for the complete and the tensor product polynomials;
see, e.g., \cite{Powell2009Block,PowellSilvesterSim,Rosseel,Ullmann2010Kronecker}. 
We first derive the bounds $\underline{c}$ and $\overline{c}$ for 
the tensor product polynomials. These bounds also apply to the complete polynomials because 
$V^{\rm C}_{s}\subset V^{\rm TP}_{s,\dots,s}$.

\begin{example} 
	Consider the setting from \cref{ex:2} and \cref{ex:3}. The non-zero patterns of the preconditioning matrices defined in \cref{eq:mean_based_B} are: 
	$$\bm{M}^{\rm TP}={\footnotesize \left(\begin{array}{ccc|ccc|ccc}
	X&& &&&&&&\\
	&X&&&&&&&\\
	&&X&&&&&&\\
	\hline
	&&&X&& & & & \\
	&&&&X&& & & \\
	&&& &&X& & & \\
	\hline
	&&&&&&X&& \\
	&&&&&&&X&\\
	&&&&&&&&X\\
	\end{array}\right)}\,, \quad 
	\bm{M}^C={\footnotesize \left(\begin{array}{c|cc|ccc}
	X&&&&&\\
	\hline
	&X&&&&\\
	&&X&&&\\
	\hline
	&&&X&&\\
	&&&&X&\\
	&&&&&X
	\end{array}\right)}\,,$$
	where $X$ stands for a non-zero block of size $N_{\rm FE}\times N_{\rm FE}$.
\end{example}

\begin{lemma}\label{th:truediag}
	Under assumption \cref{eq:omega} and for $\bm M$ defined by~\cref{eq:mean_based_B},
	the constants $\underline{c}$ and $\overline{c}$ 
	defined in~\cref{eq:const_beta} can be obtained as
	\begin{equation}\label{eq:lambda_min_max}
	\underline{c}=1-\mu\,\lambda_{\text{max}}(\bm{G}_{s,1}) \quad \text{and} \quad \overline{c}=1+\mu\,\lambda_{\text{max}}(\bm{G}_{s,1}),
	\end{equation}
	where $s=\max(s_1,\dots,s_K)$, and $\lambda_{\text{max}}(\bm{G}_{s,1})$ is the largest eigenvalue of $\bm{G}_{s,1}$ defined in~\cref{eq:G_small}.
\end{lemma}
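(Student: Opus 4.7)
The plan is to reduce the global spectral equivalence to a local (element-wise) one via \cref{th:1}, and then to exploit the Kronecker-product form of the $\bm{G}_k$ together with the symmetry of the distribution. Concretely, since the mean-based preconditioner corresponds to $\widetilde{\bm{G}}_0=\bm{G}_0=\bm{I}_{N_{\rm P}}$ and $\widetilde{\bm{G}}_k=\bm{0}$ for $k\ge1$, the sum $\sum_{k=0}^K\widetilde{a}_k^{(j)}\widetilde{\bm{G}}_k$ collapses to $a_0^{(j)}\bm{I}_{N_{\rm P}}$ on each element $\tau_j$. By \cref{th:1}, it therefore suffices to establish, for every element, an inequality of the form
\begin{equation*}
\underline{c}\,a_0^{(j)}\|\bm{v}\|^2 \le \bm{v}^T\!\Bigl(a_0^{(j)}\bm{I} + \sum_{k=1}^K a_k^{(j)}\bm{G}_k\Bigr)\bm{v} \le \overline{c}\,a_0^{(j)}\|\bm{v}\|^2,\quad \bm{v}\in\mathbb{R}^{N_{\rm P}}.
\end{equation*}

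After dividing by $a_0^{(j)}>0$, the claim reduces to the single estimate
\begin{equation*}
\Bigl|\sum_{k=1}^K \tfrac{a_k^{(j)}}{a_0^{(j)}}\,\bm{v}^T\bm{G}_k\bm{v}\Bigr| \le \mu\,\lambda_{\max}(\bm{G}_{s,1})\,\|\bm{v}\|^2.
\end{equation*}
I would split this by the triangle inequality and bound each term separately by the spectral radius of $\bm{G}_k$. The key point is then to show that $\rho(\bm{G}_k)\le\lambda_{\max}(\bm{G}_{s,1})$ for every $k=1,\dots,K$, after which the dominance assumption~\cref{eq:omega} applied point-wise on $\tau_j$ yields $\sum_k |a_k^{(j)}|/a_0^{(j)}\le\mu$ and the proof is complete.

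The main technical step, and the one where care is needed, is the bound on $\rho(\bm{G}_k)$. From the Kronecker-product decomposition~\cref{eq:AsumGGF}, each $\bm{G}_k$ is (up to the order of Kronecker factors) $\bm{I}\otimes\cdots\otimes\bm{G}_{s_k,1}\otimes\cdots\otimes\bm{I}$, so its eigenvalues coincide (with multiplicity) with those of $\bm{G}_{s_k,1}$. For the symmetric distributions in \cref{tab:Wiener-Askey}, $\alpha_n=0$, so $\bm{G}_{s_k,1}$ has zero diagonal and its spectrum is symmetric about the origin, giving $\rho(\bm{G}_{s_k,1})=\lambda_{\max}(\bm{G}_{s_k,1})$. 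Finally, since $s_k\le s$, the matrix $\bm{G}_{s_k,1}$ is a leading principal submatrix of $\bm{G}_{s,1}$, and Cauchy's interlacing theorem yields $\lambda_{\max}(\bm{G}_{s_k,1})\le\lambda_{\max}(\bm{G}_{s,1})$.

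Putting these pieces together produces the element-wise equivalence with constants $1\mp\mu\,\lambda_{\max}(\bm{G}_{s,1})$, which by \cref{th:1} lifts to the global bound~\cref{eq:const_beta}. Note that the lower bound is positive because the footnote in \cref{sec:pos-def} guarantees $\mu\le\overline{\mu}$ is chosen small enough to make $\bm{G}_{s,0}+\overline{\mu}\,\bm{G}_{s,1}$ positive definite, equivalently $\overline{\mu}\,\lambda_{\max}(\bm{G}_{s,1})<1$. The only non-routine ingredient is the interlacing/symmetry argument for bounding $\rho(\bm{G}_k)$ uniformly in $k$; everything else is an immediate consequence of \cref{th:1} and the splitting~\cref{eq:A_final}.
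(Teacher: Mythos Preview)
Your proposal is correct and follows essentially the same route as the paper: reduce via \cref{th:1} to the element-wise inequality, then control each $\bm{G}_k$ by $\lambda_{\max}(\bm{G}_{s,1})$ using the Kronecker structure together with interlacing, and finish with the dominance assumption~\cref{eq:omega}. The only cosmetic difference is that the paper builds up two-sided inequalities for $\bm{G}_{s,0}\pm\mu\bm{G}_{s,1}$ and then sums, whereas you divide by $a_0^{(j)}$ and bound the absolute value via the triangle inequality and spectral radius; these are equivalent presentations of the same argument.
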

\begin{proof}
	The proof consists of several rather straightforward steps, which we however prefer to give in full detail, since analogous technique will be used in the proofs of some of the subsequent lemmas.
	
	Using \cref{th:1}, we only need to prove that for every $\bm{v}\in \mathbb{R}^{N_{\rm P}}$, it holds that
	\begin{equation}\label{eq:to_be_shown}
	\underline{c}\,a_0^{(j)}\bm{v}^T\bm{G}_0\bm{v}\le
	\bm{v}^T\left(a_0^{(j)}\bm{G}_0+\sum_{k=1}^Ka_k^{(j)}\bm{G}_k\right)\bm{v}
	\le\overline{c}\,a_0^{(j)}\bm{v}^T\bm{G}_0\bm{v}, \quad j = 1,\ldots,N_\text{elem}.
	\end{equation}
	Since $\bm{G}_{s,0}=\bm{I}_{s}$, equations \cref{eq:lambda_min_max}  imply
	\begin{equation}
	\underline{c}\,\bm{v}^T\bm{G}_{s,0}\bm{v}\le
	\bm{v}^T\bm{G}_{s,0}\bm{v}\pm\mu\,\bm{v}^T\bm{G}_{s,1}\bm{v}\le
	\overline{c}\,\bm{v}^T\bm{G}_{s,0}\bm{v}, \quad \bm{v}\in \mathbb{R}^{s}.
	\end{equation}
	Due to the interlacing property of the eigenvalues of Jacobi matrices, we immediately obtain
	\begin{equation}
	\underline{c}\,\bm{v}^T\bm{G}_{s_k,0}\bm{v}\le
	\bm{v}^T\bm{G}_{s_k,0}\bm{v}\pm\mu\,\bm{v}^T\bm{G}_{s_k,1}\bm{v}\le
	\overline{c} \bm{v}^T\bm{G}_{s_k,0}\bm{v}, \quad \bm{v}\in \mathbb{R}^{s_k},
	\end{equation}
	for all  $s_k\leq s$. Using the tensor structure \cref{eq:AsumGGF} of the matrices $\bm{G}_k$, we get
	\begin{equation}
	\underline{c}\,\bm{v}^T\bm{G}_0\bm{v}\le
	\bm{v}^T\bm{G}_0\bm{v}\pm\mu\,\bm{v}^T\bm{G}_k\bm{v}\le
	\overline{c}\, \bm{v}^T\bm{G}_0\bm{v}, \quad k = 1,\dots,K, \quad \bm{v}\in \mathbb{R}^{N_{\rm P}}.
	\end{equation}
	Multiplying by $|a^{(j)}_k|$, we obtain
	\begin{equation}
	\underline{c}\,\bm{v}^T|a_k^{(j)}|\bm{G}_0\bm{v}\le\bm{v}^T|a_k^{(j)}|\bm{G}_0\bm{v}+\mu\,a_k^{(j)}\bm{v}^T\bm{G}_k\bm{v}\le\overline{c}\,\bm{v}^T|a_k^{(j)}|\bm{G}_0\bm{v}, \quad k = 1,\dots,K,
	\end{equation}
	which taking sum over $k$ becomes
	\begin{equation}\label{eq:first_component_I}
	\underline{c}\,\bm{v}^T\left(\sum_{k=1}^K|a_k^{(j)}|\bm{G}_0\right)\bm{v}\le
	\bm{v}^T\left(\sum_{k=1}^K|a_k^{(j)}|\bm{G}_0+\mu \sum_{k=1}^Ka_k^{(j)}\bm{G}_k\right)\bm{v}\le
	\overline{c}\,\bm{v}^T\left(\sum_{k=1}^K|a_k^{(j)}|\bm{G}_0\right)\bm{v}.
	\end{equation}
	Due to \cref{eq:omega} and the fact that $\underline{c}\leq 1\leq\overline{c}$, it also holds that
	\begin{align}\label{eq:third_component_I}
	\underline{c}\,\bm{v}^T\left(\mu\, a_0^{(j)} - \sum_{k=1}^K\vert a_k^{(j)}\vert\right)\bm{G}_0\bm{v}\leq \bm{v}^T&\left(\mu\, a_0^{(j)} - \sum_{k=1}^K\vert a_k^{(j)}\vert\right)\bm{G}_0\bm{v}\\
	& \hspace{2cm} \leq\overline{c}\,\bm{v}^T\left(\mu\, a_0^{(j)} - \sum_{k=1}^K\vert a_k^{(j)}\vert\right)\bm{G}_0\bm{v}.
	\end{align}
	Adding \cref{eq:first_component_I} and \cref{eq:third_component_I}, we get
	\begin{equation}
	\underline{c}\, \mu\,a_0^{(j)}\bm{v}^T\bm{G}_0\bm{v}\le
	\bm{v}^T\left(\mu\,a_0^{(j)}\bm{G}_0+
	\mu\sum_{k=1}^K a_k^{(j)}\bm{G}_k\right)\bm{v}\le
	\overline{c}\, \mu\,a_0^{(j)}\bm{v}^T\bm{G}_0\bm{v}.
	\end{equation}
	By dividing by $\mu>0$, we obtain the desired inequality~\cref{eq:to_be_shown}. 
\end{proof}

Since the eigenvalues of the Jacobi matrix $\bm{G}_{s,1}$ are the roots of the polynomial $\psi_{s}$, the spectral bounds $\underline{c}$ and $\overline{c}$ can be obtained directly from the maximal roots of the polynomial $\psi_{s}$, which we denote by $\lambda_{\text{max}}(\psi_{s})$. \rev{Thanks to this relation, we can formulate the following corollary purely in terms of these extremal roots.}
\begin{corollary}\label{cor1}
	Let \cref{eq:omega} be satisfied and let the matrix $\bm{M}$ represent the mean-based preconditioning \cref{eq:mean_based_B}. Then
	\begin{equation}\label{eq:cor1}
	\kappa(\bm{M}^{-1}\bm{A}) \leq \frac{1+\mu\,\lambda_{\text{max}}(\psi_{s})}{1 -\mu\,\lambda_{\text{max}}(\psi_{s})},
	\end{equation} 
	where for tensor product polynomials, we define $s=\max(s_1,\dots,s_K)$. 
\end{corollary}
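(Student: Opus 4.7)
The plan is to derive the corollary as an essentially immediate consequence of \cref{th:truediag} combined with the classical characterization of the eigenvalues of the Jacobi matrix. First I would recall that by \cref{th:truediag}, under assumption \cref{eq:omega} we already have the spectral equivalence
\begin{equation*}
\bigl(1-\mu\,\lambda_{\max}(\bm{G}_{s,1})\bigr)\bm{v}^T\bm{M}\bm{v}\le \bm{v}^T\bm{A}\bm{v}\le \bigl(1+\mu\,\lambda_{\max}(\bm{G}_{s,1})\bigr)\bm{v}^T\bm{M}\bm{v}
\end{equation*}
for all $\bm{v}\in\mathbb{R}^{N_{\rm FE}N_{\rm P}}$, with $s=\max(s_1,\dots,s_K)$ in the tensor product case.

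Next, I would invoke the standard fact (explicitly recorded after \cref{eq:G_small} in the paper) that the eigenvalues of the Jacobi matrix $\bm{G}_{s,1}$ coincide with the roots of the orthogonal polynomial $\psi_s$; in particular, $\lambda_{\max}(\bm{G}_{s,1})=\lambda_{\max}(\psi_s)$. Substituting this identity into the bounds for $\underline{c}$ and $\overline{c}$ of \cref{th:truediag}, the spectral condition number of $\bm{M}^{-1}\bm{A}$ can be estimated as the ratio $\overline{c}/\underline{c}$, which yields
\begin{equation*}
\kappa(\bm{M}^{-1}\bm{A})\le \frac{1+\mu\,\lambda_{\max}(\psi_{s})}{1-\mu\,\lambda_{\max}(\psi_{s})}.
\end{equation*}

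There is essentially no obstacle to overcome here: the only subtlety worth mentioning explicitly is that the denominator is strictly positive. This follows from the footnote discussion around \cref{eq:omega_max}, which guarantees that the admissible range of $\mu$ (ensuring positive definiteness of $\bm{G}_{s,0}+\mu\,\bm{G}_{s,1}$, i.e., $\mu\le \overline{\mu}$) forces $\mu\,\lambda_{\max}(\psi_s)<1$. I would note at the end that for the complete polynomial space $V^{\rm C}_s$ the same bound applies, since $V^{\rm C}_s\subset V^{\rm TP}_{s,\ldots,s}$ and the proof of \cref{th:truediag} only used the interlacing property of eigenvalues of Jacobi matrices, so the value $s$ in the corollary is exactly the maximal polynomial degree plus one in either setting.
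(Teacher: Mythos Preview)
Your proposal is correct and matches the paper's approach: the corollary is simply \cref{th:truediag} combined with the fact that the eigenvalues of the Jacobi matrix $\bm{G}_{s,1}$ are the roots of $\psi_s$, so $\lambda_{\max}(\bm{G}_{s,1})=\lambda_{\max}(\psi_s)$ and $\kappa(\bm{M}^{-1}\bm{A})\le\overline{c}/\underline{c}$. The paper does not give a separate proof beyond this identification, so your additional remarks on positivity of the denominator and the inclusion $V^{\rm C}_s\subset V^{\rm TP}_{s,\dots,s}$ are fine elaborations rather than departures.
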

\rev{Note that if $a$ is not significantly dominated by the term $a_0$ in the sense of \cref{eq:omega}, we can expect the mean-based preconditioning to perform rather poorly, see also \cite{Powell2009Block}. This is reflected in the bound \cref{eq:cor1} by the denominator $1 -\mu\,\lambda_{\text{max}}(\psi_{s})$ being close to zero.}
\rev{
\begin{remark}
It is interesting to note that the obtained equivalence constants $0<\underline{c}\le\overline{c}$ can be also used for a~posteriori estimates of the energy norm $\bm{e}^T\bm{Ae}$ of the algebraic error. Let $\bm{e}:=\bm{x}-\widetilde{\bm{x}}$ be the error of inexact solutions $\widetilde{\bm{x}}$ of the linear systems $\bm{Ax}=\bm{b}$ and let
\begin{equation}\underline{c}\,\bm{v}^T\bm{Mv}\le \bm{v}^T\bm{Av}\le \overline{c}\,\bm{v}^T\bm{Mv}
\end{equation} 
hold for all $\bm{v}$ of appropriate size. If solutions of the system with $\bm{M}$ are easily accessible, then due to $\bm{e}^T\bm{Ae}=\bm{r}^T\bm{A}^{-1}\bm{r}$, $\bm{r}:=\bm{Ae}$, the bounds to the error can be obtained efficiently as
\[\frac{1}{\overline{c}}\,\bm{r}^T\bm{M}^{-1}\bm{r}\le \bm{r}^T\bm{A}^{-1}\bm{r}\le \frac{1}{\underline{c}}\,\bm{r}^T\bm{M}^{-1}\bm{r};
\]
see also \cite[Theorem~5.3]{AinsworthOden}.
\end{remark}
}

\subsection{Preconditioning using truncated expansion of \texorpdfstring{{\boldmath$a$}}{a}}

Instead of the block-diagonal matrix \cref{eq:mean_based_B}, we can consider a block-diagonal matrix with larger blocks. \rev{This strategy can be advantageous especially if only a small number of parallel processes can be employed.}

If we consider the tensor product polynomials, we can obtain a new inner product by omitting the last term of the expansion \cref{eq:a_expansion} of $a(\bm{x},\bm{\xi})$, i.e.,
\begin{equation}
(u,v)_M=\int_{{\mathbb R}^K}\int_D
\left(a_0(\bm{x})+\sum_{k=1}^{K-1} a_k(\bm{x})\xi_k\right)
\nabla u(\bm{x},\bm{\xi})\cdot
\nabla v(\bm{x},\bm{\xi})\rho(\bm{\xi})
\,{\rm d}\bm{x}\,{\rm d}\bm{\xi}.
\end{equation}
The corresponding preconditioning matrix
\begin{equation}\label{eq:K-1_term_B}
\bm{M}=\sum_{k=0}^{K-1}\bm{G}_k\otimes \bm{F}_k
\end{equation}
is then block-diagonal with the diagonal blocks of size $N_{\rm FE}\cdot \prod_{k=1}^{K-1}s_k
\times N_{\rm FE}\cdot \prod_{k=1}^{K-1}s_k$.

\begin{example} 
	Consider the setting from \cref{ex:2}. The non-zero pattern of the preconditioning matrix defined in \cref{eq:K-1_term_B} is:
	\begin{equation}
	\bm{M}^{\rm TP}={\footnotesize \left(\begin{array}{ccc|ccc|ccc}
	X&X& &&&&&&\\
	X&X&X&&&&&&\\
	&X&X&&&&&&\\
	\hline
	&&&X&X& & & & \\
	&&&X&X&X& & & \\
	&&& &X&X& & & \\
	\hline
	&&&&&&X&X& \\
	&&&&&&X&X&X\\
	&&&&&&&X&X\\
	\end{array}\right)}.
	\end{equation}
\end{example}

\rev{Note that one can consider many other truncation schemes, when a various number of terms is truncated from the expansion \cref{eq:a_expansion}. The mean-based preconditioning and the preconditioning \cref{eq:K-1_term_B} represent only two extreme cases of this strategy. After proper reordering, either of the expansion \cref{eq:a_expansion} or the resulting matrix $\bm{A}$, we will again get a block diagonal preconditioner $\bm{M}$. The efficiency of omitting a particular term depends on a specific setting (the corresponding stochastic approximation space).} 

\rev{It is also possible to apply this technique to complete polynomials. However, if we use the natural ordering 
of complete polynomials, the preconditioning matrix will not have a block diagonal form. If we consider complete polynomials ordered as in \cref{eq:ex_TP_C_A}, the resulting bound will be the same as in the case of tensor-product polynomials.}

\begin{lemma}\label{th:truncated}
	Under assumption \cref{eq:omega} and for $\bm M$ defined by~\cref{eq:K-1_term_B}, the constants $\underline{c}$ and $\overline{c}$ defined in~\cref{eq:const_beta} can be obtained as
	\begin{equation}\label{eq:lambda_min_max_2}
	\underline{c}=1-\mu\,\lambda_{\text{max}}(\bm{G}_{s_K,1}) \quad \text{and} \quad \overline{c}=1+\mu\,\lambda_{\text{max}}(\bm{G}_{s_K,1}),
	\end{equation}
	where $\lambda_{\text{max}}(\bm{G}_{s_K,1})$ is the largest eigenvalue of 
	$\bm{G}_{s_K,1}$ defined in~\cref{eq:G_small}. 
\end{lemma}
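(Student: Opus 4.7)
The plan is to invoke \cref{th:1} and reduce the global spectral equivalence to an element-wise bound. With the shorthand $\bm{P}^{(j)} := \sum_{k=0}^{K-1} a_k^{(j)} \bm{G}_k$, what has to be shown is that for every $j$ and every $\bm{v} \in \mathbb{R}^{N_{\rm P}}$,
\[
(1 - \mu \lambda_K)\,\bm{v}^T \bm{P}^{(j)} \bm{v} \le \bm{v}^T (\bm{P}^{(j)} + a_K^{(j)} \bm{G}_K) \bm{v} \le (1 + \mu \lambda_K)\,\bm{v}^T \bm{P}^{(j)} \bm{v},
\]
where $\lambda_K := \lambda_\text{max}(\bm{G}_{s_K, 1})$. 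Subtracting the middle term, this reduces to the single scalar bound $|a_K^{(j)}\,\bm{v}^T \bm{G}_K \bm{v}| \le \mu \lambda_K\,\bm{v}^T \bm{P}^{(j)} \bm{v}$.

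The second step is to exploit the Kronecker structure from \cref{eq:AsumGGF}. Since $\bm{G}_{s_K,0} = \bm{I}_{s_K}$, each $\bm{G}_k$ with $k < K$ carries $\bm{I}_{s_K}$ as its leftmost Kronecker factor, whereas $\bm{G}_K$ carries $\bm{G}_{s_K,1}$ there. Hence
\[
\bm{G}_K = \bm{G}_{s_K,1} \otimes \bm{I}_n, \qquad \bm{P}^{(j)} = \bm{I}_{s_K} \otimes \bm{Q}^{(j)},
\]
with $n = \prod_{k<K} s_k$ and $\bm{Q}^{(j)} := \sum_{k=0}^{K-1} a_k^{(j)} \bm{R}_k$, where $\bm{R}_k$ is obtained from $\bm{G}_k$ by deleting the leftmost Kronecker factor. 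From these identities I immediately get $|\bm{v}^T \bm{G}_K \bm{v}| \le \lambda_K \|\bm{v}\|^2$ and $\bm{v}^T \bm{P}^{(j)} \bm{v} \ge \lambda_\text{min}(\bm{Q}^{(j)}) \|\bm{v}\|^2$, so it suffices to prove $|a_K^{(j)}| \le \mu\,\lambda_\text{min}(\bm{Q}^{(j)})$.

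The lower bound on $\lambda_\text{min}(\bm{Q}^{(j)})$ will come from simultaneous diagonalisation: the matrices $\bm{R}_1, \ldots, \bm{R}_{K-1}$ pairwise commute because each acts non-trivially on a distinct Kronecker factor, so they share an eigenbasis whose joint eigenvalues are tuples of eigenvalues of the Jacobi matrices $\bm{G}_{s_k,1}$. Since each such spectrum is symmetric about zero (see \cref{tab:Wiener-Askey}), minimising independently in each component gives
\[
\lambda_\text{min}(\bm{Q}^{(j)}) = a_0^{(j)} - \sum_{k=1}^{K-1} |a_k^{(j)}|\,\lambda_\text{max}(\bm{G}_{s_k,1}).
\]
The required inequality $|a_K^{(j)}| \le \mu\,\lambda_\text{min}(\bm{Q}^{(j)})$ then follows by combining \cref{eq:omega}, in the rearranged form $|a_K^{(j)}| + \sum_{k<K} |a_k^{(j)}| \le \mu a_0^{(j)}$, with the auxiliary property $\mu\,\lambda_\text{max}(\bm{G}_{s_k,1}) \le 1$ for all $k$, inherited from the admissible range $\mu \le \overline{\mu}$ fixed in \cref{sec:pos-def}.

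The main obstacle is sharpness of the $\lambda_\text{min}(\bm{Q}^{(j)})$ estimate: a naive triangle-inequality argument on the individual $a_k^{(j)} \bm{R}_k$ terms in $\bm{Q}^{(j)}$ is too weak to close the argument, and the simultaneous diagonalisation combined with the spectral symmetry of the classical orthogonal polynomials is essential. The auxiliary bound $\mu\,\lambda_\text{max}(\bm{G}_{s_k,1}) \le 1$, which is not stated explicitly in the lemma but is crucial for the algebra to work out, will need to be flagged in the writeup.
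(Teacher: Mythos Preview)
Your argument is correct, but it follows a genuinely different route from the paper's. After the common first step (invoke \cref{th:1} to reduce to the element-wise inequality \cref{eq:lemma4_c}), the paper does \emph{not} pass through the scalar bound $|a_K^{(j)}|\le\mu\,\lambda_{\min}(\bm{Q}^{(j)})$ or any simultaneous diagonalisation. Instead it splits the middle expression additively into three nonnegative pieces---$|a_K^{(j)}|\bm{G}_0+\mu a_K^{(j)}\bm{G}_K$, then $\sum_{k<K}|a_k^{(j)}|\bm{G}_0+\mu\sum_{k<K}a_k^{(j)}\bm{G}_k$, then $(\mu a_0^{(j)}-\sum_{k}|a_k^{(j)}|)\bm{G}_0$---sandwiches each piece between $\underline{c}$ and $\overline{c}$ times the corresponding preconditioner part (the first using \cref{eq:lambda_min_max_2}, the latter two using only $\underline{c}\le 1\le\overline{c}$ together with positivity of $\bm{G}_0\pm\mu\bm{G}_k$ and of $\mu a_0^{(j)}-\sum_k|a_k^{(j)}|$), and adds. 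This is exactly the pattern already used in \cref{th:truediag}, and it transfers with almost no change to the splitting-based lemmas later; its advantage is modularity and that it never needs to compute $\lambda_{\min}(\bm{Q}^{(j)})$. Your approach, by contrast, is more explicit: the commuting Kronecker factors let you write down $\lambda_{\min}(\bm{Q}^{(j)})$ exactly, and the final inequality $\sum_{k<K}|a_k^{(j)}|(1-\mu\lambda_{\max}(\bm{G}_{s_k,1}))\ge 0$ makes transparent precisely where the auxiliary bound $\mu\lambda_{\max}(\bm{G}_{s_k,1})\le 1$ enters. The paper uses that same auxiliary bound---it is the content of \cref{eq:first_for_splitting}---so your flag about it not being stated in the lemma is well placed but not a gap: both proofs rely on it, and the paper treats it as part of the standing assumption $\mu\le\overline{\mu}$ from \cref{sec:pos-def}.
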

\begin{proof}
	Using \cref{th:1}, we only need to prove for every $\bm{v}\in \mathbb{R}^{N_{\rm P}}$ 
	\begin{equation}\label{eq:lemma4_c}
	\underline{c}\,\bm{v}^T\left(\sum_{k=0}^{K-1}{a}_k^{(j)}{\bm{G}}_k\right)\bm{v}\le\bm{v}^T\left(\sum_{k=0}^K a_k^{(j)}\bm{G}_k\right)\bm{v}\le \overline{c}\,\bm{v}^T\left(\sum_{k=0}^{K-1}{a}_k^{(j)}{\bm{G}}_k\right)\bm{v},\quad j=1,\dots,N_{\rm elem}.
	\end{equation}
	Analogously to \cref{th:truediag}, equations \cref{eq:lambda_min_max_2} imply
	\begin{equation}\label{eq:first_component_II}
	\underline{c}\,\bm{v}^T|a_K^{(j)}|\bm{G}_0\bm{v}\le
	\bm{v}^T|a_K^{(j)}|\bm{G}_0\bm{v}+\mu\, a_K^{(j)}\bm{v}^T\bm{G}_K\bm{v}\le
	\overline{c}\,\bm{v}^T|a_K^{(j)}|\bm{G}_0\bm{v}.
	\end{equation}
	From the definition of $\mu$, we have 
	\begin{equation}\label{eq:first_for_splitting}
	\bm{v}^T\left(\bm{G}_{0}\pm\mu\,\bm{G}_{k}\right)\bm{v}\ge 0, \quad k = 1,\dots,K,
	\end{equation}
	which multiplying by $|a_k^{(j)}|$ and  taking sum over $k = 1,\ldots,K-1$ becomes
	\begin{equation}\label{eq:aux_22}
	\bm{v}^T\left(\sum_{k=1}^{K-1}\vert a_k^{(j)}\vert\bm{G}_{0}+\mu 
	\sum_{k=1}^{K-1}a_k^{(j)} \bm{G}_{k}\right)\bm{v}\ge 0.
	\end{equation}
	Using $\underline{c}\leq 1\leq\overline{c}$, we get from \cref{eq:aux_22}
	\begin{align}\label{eq:second_component_II}
	\underline{c}\,\bm{v}^T\left(
	\sum_{k=1}^{K-1}\vert a_k^{(j)}\vert \bm{G}_0+
	\mu\sum_{k=1}^{K-1}{a}_k^{(j)}
	{\bm{G}}_k\right)\bm{v}\le &
	\, \bm{v}^T\left(
	\sum_{k=1}^{K-1}\vert a_k^{(j)}\vert \bm{G}_0+
	\mu\sum_{k=1}^{K-1}{a}_k^{(j)}
	{\bm{G}}_k\right)\bm{v}\\
	&\le \overline{c}\,
	\bm{v}^T\left(
	\sum_{k=1}^{K-1}\vert a_k^{(j)}\vert \bm{G}_0+
	\mu\sum_{k=1}^{K-1}{a}_k^{(j)}
	{\bm{G}}_k\right)\bm{v}.
	\end{align}
	Adding \cref{eq:first_component_II}, \cref{eq:second_component_II}, and \cref{eq:third_component_I}, we finally obtain 
	\begin{align}\label{eq:last_for_splitting}
	\underline{c}\,\bm{v}^T\left(
	\mu\,a_0^{(j)} \bm{G}_0+
	\mu\sum_{k=1}^{K-1}{a}_k^{(j)}
	{\bm{G}}_k\right)\bm{v}\le
	\bm{v}^T&\left(
	\mu\,a_0^{(j)} \bm{G}_0+
	\mu\sum_{k=1}^{K}{a}_k^{(j)}
	{\bm{G}}_k\right)\bm{v}\\ &\hspace{.5cm}\le \overline{c}\,
	\bm{v}^T\left(
	\mu\, a_0^{(j)} \bm{G}_0+
	\mu\,\sum_{k=1}^{K-1}{a}_k^{(j)}
	{\bm{G}}_k\right)\bm{v},
	\end{align}
	which dividing by $\mu$ yields the desired inequality~\cref{eq:lemma4_c}.
\end{proof}

Using this lemma, the spectral bounds $\underline{c}$ and $\overline{c}$ can be obtained directly from the roots of the polynomial $\psi_{s_K}$, similarly as in the previous section.
\begin{corollary}
	Let \cref{eq:omega} be satisfied and let the matrix $\bm{M}$ represent the $(K-1)$-term expansion preconditioning \cref{eq:K-1_term_B}. Then
	\begin{equation}
	\kappa(\bm{M}^{-1}\bm{A}) \leq \frac{1+\mu\,\lambda_{\text{max}}(\psi_{s_K})}{1 -\mu\,\lambda_{\text{max}}(\psi_{s_K})}.
	\end{equation}
\end{corollary}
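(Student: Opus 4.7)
My plan is to derive this corollary as an immediate consequence of \cref{th:truncated}, combined with the classical identification of the spectrum of the Jacobi matrix with the roots of the associated orthonormal polynomial. The argument is structurally identical to the one used for \cref{cor1}; only the relevant Jacobi matrix changes from $\bm{G}_{s,1}$ to $\bm{G}_{s_K,1}$.

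Concretely, the first step is to invoke \cref{th:truncated} to obtain the equivalence
\begin{equation}
\underline{c}\,\bm{v}^T\bm{M}\bm{v}\le\bm{v}^T\bm{A}\bm{v}\le\overline{c}\,\bm{v}^T\bm{M}\bm{v},\qquad \bm{v}\in\mathbb{R}^{N_{\rm FE}N_{\rm P}},
\end{equation}
with $\underline{c}=1-\mu\,\lambda_{\text{max}}(\bm{G}_{s_K,1})$ and $\overline{c}=1+\mu\,\lambda_{\text{max}}(\bm{G}_{s_K,1})$. Once both constants are positive, this spectral equivalence gives $\lambda_{\text{min}}(\bm{M}^{-1}\bm{A})\ge\underline{c}$, $\lambda_{\text{max}}(\bm{M}^{-1}\bm{A})\le\overline{c}$, and hence $\kappa(\bm{M}^{-1}\bm{A})\le\overline{c}/\underline{c}$. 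The second step is to use the fact recalled right after \cref{eq:G_small}, namely that the eigenvalues of the Jacobi matrix $\bm{G}_{s_K,1}$ coincide with the roots of the orthonormal polynomial $\psi_{s_K}$, so $\lambda_{\text{max}}(\bm{G}_{s_K,1})=\lambda_{\text{max}}(\psi_{s_K})$. Substituting this identity into $\overline{c}/\underline{c}$ produces the stated bound.

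The only point that demands a moment of attention, and what I would flag as the mild obstacle, is ensuring strict positivity of $\underline{c}$, i.e.\ that $\mu\,\lambda_{\text{max}}(\psi_{s_K})<1$. This is handled directly by the standing assumption \cref{eq:omega} together with the discussion in \cref{sec:pos-def}: the admissible values $\mu\le\overline{\mu}$ are chosen precisely so that $\bm{G}_{s_K,0}+\mu\,\bm{G}_{s_K,1}$ is positive definite, which is equivalent to $\mu\,\lambda_{\text{max}}(\psi_{s_K})<1$. For the symmetric distributions tabulated in \cref{tab:Wiener-Askey}, this amounts to a check on the largest zero of the corresponding orthogonal polynomial (trivial for the bounded-support cases, and encoded in the $\overline{\mu}$ of the Hermite case via the Szeg\H{o} bound cited in the footnote of \cref{sec:pos-def}). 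No additional technical machinery is needed.
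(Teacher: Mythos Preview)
Your proposal is correct and follows exactly the approach of the paper, which does not even give an explicit proof: it simply remarks that the bounds follow from \cref{th:truncated} and the identification of the eigenvalues of $\bm{G}_{s_K,1}$ with the roots of $\psi_{s_K}$, ``similarly as in the previous section.'' Your additional paragraph on the positivity of $\underline{c}$ is a useful clarification but goes slightly beyond what the paper makes explicit.
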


\subsection{Splitting-based preconditioning}\label{sec:splitting_based}

Another inner product can be obtained by splitting the approximation space $V$ into two complementary subspaces, $V=U\oplus W$, $U\cap W=0$, so that any $v\in V$ can be uniquely decomposed as $v=v_U+v_W$, $v_U\in U$, $v_W\in W$. Using this decomposition, we can define the new inner product component-wise, i.e., 
\begin{equation}\label{eq:splitting_based}
({u},{v})_M=({u}_U,{v}_U)_A+({u}_W,{v}_W)_A.
\end{equation}
For the space $V^{\rm TP}_{s_1,\dots,s_K}$ of the tensor product polynomials of the degrees $s_1-1,\dots,s_K-1$, we can use the splitting $U=V^{\rm TP}_{s_1,\dots,s_{K-1},s_K-1}$ and $W$ such that $V^{\rm TP}_{s_1,\dots,s_K}=U\oplus W$, i.e.,~$W$ contains the polynomials of $V^{\rm TP}_{s_1,\dots,s_K}$ of degree exactly $s_K-1$ in the variable $\xi_K$. The corresponding preconditioning matrix $M$ has then a two-by-two block-diagonal 
form, see, e.g.,~\cite{Rosseel,Sousedik2014Hierarchical},
and can be obtained as 
\begin{equation}\label{eq:MinLemma5}
\bm{M}=\sum_{k=0}^{K-1}
{\bm{G}}_k\otimes \bm{F}_k+\widetilde{\bm{G}}_K\otimes \bm{F}_K, \quad 
\widetilde{\bm{G}}_K=\widetilde{\bm{G}}_{s_K,1}\otimes \bm{G}_{s_{K-1},0}\otimes \dots \otimes \bm{G}_{2,0}\otimes \bm{G}_{1,0},
\end{equation}
where the matrix $\widetilde{\bm{G}}_{s_K,1}$ is obtained from ${\bm{G}}_{s_K,1}$ by annihilating the very last elements in both the sub- and super-diagonal. For distributions with $\alpha_n = 0$, $n=1,2,\ldots$ in the recurrence \cref{eq:const_bj_cj}, this matrix satisfies
\begin{equation}
\widetilde{\bm{G}}_{s_K,1} = \begin{pmatrix}\bm{G}_{s_K-1,1}&\\&0\end{pmatrix}.
\end{equation}

For the space $V^{\rm C}_{s}$ of the complete polynomials of the total degree 
at most $s-1$, we can use the splitting $U=V^{\rm C}_{s-1}$ and $W=W^{\rm C}_{s}$, where $W^{\rm C}_{s}$ is the span of the complete polynomials of the total degree exactly $s-1$. Similarly to the previous case, the corresponding preconditioning matrix $\bm M$ has then a two-by-two block-diagonal form and can be obtained as
\begin{equation}\label{eq:MinLemma6}
\bm{M}=\sum_{k=0}^{K}{\widetilde{\bm{G}}}_k\otimes \bm{F}_k=
\bm{G}_0\bm{F}_0+\sum_{k=1}^{K}{\widetilde{\bm{G}}}_k\otimes \bm{F}_k,
\end{equation}
where the matrices $\widetilde{\bm{G}}_k$ coincide with $\bm{G}_k$ up to the last sub- and super-diagonal blocks of the sizes $(N^{\rm C}_{s-1}-N^{\rm C}_{s-2})\times(N_{s}^{\rm C}-N^{\rm C}_{s-1})$ which are annihilated in $\widetilde{\bm{G}}_k$.

If $U$ and $W$ are close to orthogonal, the preconditioning based on the splitting \cref{eq:splitting_based} enables to estimate the error reduction when the approximation space $U$ is enriched by $W$ in the Galerkin method. This can be exploited in adaptive algorithms, where $W$ is sometimes called the `detail' space; \rev{see Remark~\ref{rem413} and,} e.g.,~\cite{Bespalov2014Energy,Crowder2018Efficient,Pultarova2015Adaptive}.

\begin{example}
	Consider the setting from \cref{ex:2} and \cref{ex:3}. The non-zero patterns of the preconditioning matrices defined in \cref{eq:MinLemma5} and \cref{eq:MinLemma6} are:
	\begin{equation}
	\bm{M}^{\rm TP}={\footnotesize \left(\begin{array}{ccc|ccc|ccc}
	X&X& &X& & & & & \\
	X&X&X& &X& & & & \\
	&X&X& & &X& & & \\
	\hline
	X & & &X&X& & & & \\
	& X& &X&X&X& & & \\
	& & X& &X&X& & & \\
	\hline
	&&&&&&X&X& \\
	&&&&&&X&X&X\\
	&&&&&&&X&X\\
	\end{array}\right)}\,,\quad
	\bm{M}^{\rm C}={\footnotesize \left(\begin{array}{c|cc|ccc}
	X&X&X& & & \\
	\hline
	X&X& & & & \\
	X& &X& & & \\
	\hline
	& & &X& & \\
	& & & &X& \\
	& & & & &X
	\end{array}\right)}.
	\end{equation}
\end{example}

As we will see later in this section, the efficiency of the splitting-based preconditioner, both for the tensor-product and the complete polynomials, is determined by the spectral properties of the matrix
\begin{equation}\label{eq:mat_H}
\bm{H}^\pm_s=\left(\bm{I}_{s}\pm\mu\begin{pmatrix}\bm{G}_{s-1,1}&\\&0\end{pmatrix}\right)^{-1}\left(\bm{I}_{s}\pm\mu\,\bm{G}_{s,1}\right),
\end{equation}
where either $+$ or $-$ sign is considered.
We first investigate the spectrum of $\bm{H}^\pm_s$. In the subsequent lemma, we link the spectral properties of $\bm{M}^{-1}\bm{A}$ and that of $\bm{H}^\pm_s$.

\begin{lemma}\label{lemma49x}
	At most two eigenvalues of $\bm{H}^\pm_s$ defined in \cref{eq:mat_H} are different from 
	unity. These two eigenvalues do not depend on the sign considered and can be obtained as 
	\begin{equation}
	\lambda_{\min}=1-\sqrt{1-d_{s}}, \quad \lambda_{\max}=1+\sqrt{1-d_s},
	\end{equation}
	where
	\begin{equation}
	d_{s} = \frac{1}{e_{s}^T(\bm{I}_{s}+\mu\,\bm{G}_{s,1})^{-1}e_{s}}<1.
	\end{equation}
	Finally, for a fixed $s$, $d_s$ increases with $\mu\ge 0$ decreasing.
\end{lemma}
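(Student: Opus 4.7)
The plan is to exploit the fact that the difference $\bm{G}_{s,1}-\widetilde{\bm{G}}_{s,1}$ between $\bm{G}_{s,1}$ and $\widetilde{\bm{G}}_{s,1}:=\diag(\bm{G}_{s-1,1},0)$ has rank at most two, being supported only in the last two rows and columns of $\bm{G}_{s,1}$. Writing
\begin{equation*}
\bm{H}^+_s-\bm{I}_s \;=\; \mu\,(\bm{I}_s+\mu\,\widetilde{\bm{G}}_{s,1})^{-1}(\bm{G}_{s,1}-\widetilde{\bm{G}}_{s,1}),
\end{equation*}
the right-hand side is a product of an invertible matrix and a rank-two matrix, hence $\bm{H}^+_s$ has at most two eigenvalues different from unity. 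To identify them, I would factor $\bm{G}_{s,1}-\widetilde{\bm{G}}_{s,1}=\sqrt{\beta_{s-1}}(e_{s-1}e_s^T+e_s e_{s-1}^T)=UV^T$ with $U,V\in\mathbb{R}^{s\times 2}$, and invoke the standard identity that the nonzero eigenvalues of $XUV^T$ coincide with those of the $2\times 2$ matrix $V^TXU$. Taking $X=(\bm{I}_s+\mu\,\widetilde{\bm{G}}_{s,1})^{-1}$, whose block-diagonal form trivializes its action on $e_{s-1}$ and $e_s$, one finds that $\mu V^TXU$ reduces to a $2\times 2$ anti-diagonal matrix with eigenvalues $\pm\mu\sqrt{\beta_{s-1}\gamma_{s-1}}$, where $\gamma_{s-1}:=e_{s-1}^T(\bm{I}_{s-1}+\mu\,\bm{G}_{s-1,1})^{-1}e_{s-1}$.

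The next step identifies this with $\pm\sqrt{1-d_s}$ by a Schur-complement computation: splitting $\bm{I}_s+\mu\,\bm{G}_{s,1}$ into an $(s-1)\times(s-1)$ block and a trailing $1\times 1$ block, and using $\alpha_{s-1}=0$ (so the trailing block equals $1$), the $(s,s)$ entry of the inverse equals $1/(1-\mu^2\beta_{s-1}\gamma_{s-1})$. Hence $d_s=1-\mu^2\beta_{s-1}\gamma_{s-1}$ and the two nonunity eigenvalues of $\bm{H}^+_s$ become $1\pm\sqrt{1-d_s}$. Independence from the chosen sign then follows by conjugation with the signature $D=\diag(1,-1,\ldots,(-1)^{s-1})$: because $\alpha_n=0$ forces $D\bm{G}_{s,1}D=-\bm{G}_{s,1}$ and $D\widetilde{\bm{G}}_{s,1}D=-\widetilde{\bm{G}}_{s,1}$, we obtain $\bm{H}^-_s=D\bm{H}^+_s D$, so the two matrices are similar and share the same spectrum.

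For the bound $d_s<1$ and the monotonicity claim, expand $f(\mu):=e_s^T(\bm{I}_s+\mu\,\bm{G}_{s,1})^{-1}e_s$ in the orthonormal eigenbasis of $\bm{G}_{s,1}$. Signature symmetry pairs the nonzero eigenvalues as $\pm\lambda_i$ with equal squared last components $c_i^2$ (plus one zero eigenvalue with weight $c_0^2$ when $s$ is odd), which, after adding the paired terms, yields
\begin{equation*}
f(\mu)\;=\;\sum_i\frac{2c_i^2}{1-\mu^2\lambda_i^2}+c_0^2,
\end{equation*}
an even function of $\mu$ with $f(0)=\sum 2c_i^2+c_0^2=\|e_s\|^2=1$ that is strictly increasing in $\mu^2$ throughout the regime where $\bm{I}_s+\mu\,\bm{G}_{s,1}$ is positive definite. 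Thus $f(\mu)>1$ for $\mu>0$, yielding $d_s=1/f(\mu)<1$, and $d_s$ strictly decreases in $\mu$, i.e., increases as $\mu$ decreases. The principal obstacle is essentially bookkeeping---dovetailing the rank-two reduction, the Schur-complement identity, and the signature-symmetry argument---rather than any single nontrivial step.
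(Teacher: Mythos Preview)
Your proof is correct and follows essentially the same route as the paper: the rank-two perturbation, the reduction to a $2\times 2$ problem, and the identification $d_s=1-\mu^2\beta_{s-1}\gamma_{s-1}$ all match. The presentational differences are minor but arguably in your favour. For sign-independence, the paper computes the eigenvalues for both signs and observes that the tridiagonal recursion $d_j=1-\mu^2\beta_{j-1}/d_{j-1}$ (cited from Meurant) is insensitive to the sign; your signature conjugation $\bm{H}^-_s=D\bm{H}^+_sD$ delivers the similarity in one line. For the bound $d_s<1$ and monotonicity, the paper invokes the Gauss--Christoffel quadrature for the reversed Jacobi matrix $J_s\bm{G}_{s,1}J_s^{-1}$, developed only in the discussion following the lemma, to write $d_s^{-1}=\sum_j\widehat{\omega}_j/(1-\mu^2\widehat{\lambda}_j^2)$; your eigenbasis expansion of $e_s^T(\bm{I}_s+\mu\bm{G}_{s,1})^{-1}e_s$ is the same computation without the quadrature vocabulary, since the quadrature weights are precisely the squared last components of the eigenvectors. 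Both arguments lean on $\alpha_n=0$ for the eigenvalue pairing.
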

\begin{proof}
	Since 
	\begin{equation}\label{eq:rank_2_update}
	\left(\bm{I}_{s}\pm\mu\,\bm{G}_{s,1}\right) =\left(\bm{I}_{s}\pm\mu\begin{pmatrix}\bm{G}_{s-1,1}&\\&0\end{pmatrix}\right) \pm \mu\sqrt{\beta_{\rev{s-1}}}(e_{s-1}e_s^T + e_se_{s-1}^T),
	\end{equation}
	i.e., the two matrices differ by a rank-2 matrix, the first statement follows directly. Further, using \cref{eq:rank_2_update}, we have
	\begin{align}\label{eq:rank_2_update_inverse}
	&\left(\bm{I}_{s}\pm\mu\begin{pmatrix}\bm{G}_{s-1,1}&\\&0\end{pmatrix}\right)^{-1}\left(\bm{I}_{s}\pm\mu\,\bm{G}_{s,1}\right)\\ &\hspace{3cm}= \bm{I}_s \pm \mu\sqrt{\beta_{\rev{s-1}}}\left(\bm{I}_{s}\pm\mu\begin{pmatrix}\bm{G}_{s-1,1}&\\&0\end{pmatrix}\right)^{-1}(e_{s-1}e_{s}^T + e_se_{s-1}^T)\\
	&\hspace{3cm}=\bm{I}_s \pm \mu\,\sqrt{\beta_{\rev{s-1}}}\begin{pmatrix}(\bm{I}_{s-1}\pm\mu\bm{G}_{s-1,1})^{-1}&\\&1\end{pmatrix}(e_{s-1}e_{s}^T + e_se_{s-1}^T).
	\end{align}
	Therefore, to obtain those non-unit eigenvalues, it suffices to investigate the last two-by-two diagonal block of \cref{eq:rank_2_update_inverse}, which has the form
	\begin{equation}
	\begin{pmatrix}
	1 & \pm\mu\,\sqrt{\beta_{\rev{s-1}}}e_{s-1}^T(\bm{I}_{s-1}\pm\mu\bm{G}_{s-1,1})^{-1}e_{s-1}\\
	\pm\mu\sqrt{\beta_{\rev{s-1}}} & 1
	\end{pmatrix}.
	\end{equation}
	The eigenvalues therefore satisfy 
	\begin{align}
	\lambda_{\min}&=1-\sqrt{\mu^2\beta_{\rev{s-1}}e_{s-1}^T(\bm{I}_{s-1}\pm\mu\bm{G}_{s-1,1})^{-1}e_{s-1}},\\ \lambda_{\max}&=1+\sqrt{\mu^2\beta_{\rev{s-1}}e_{s-1}^T(\bm{I}_{s-1}\pm\mu\bm{G}_{s-1,1})^{-1}e_{s-1}}.
	\end{align}
	Defining $d_{j}^{-1} = e_{j}^T(\bm{I}_{j}+\mu\bm{G}_{j,1})^{-1}e_{j}$ and applying the recursive formula for the last element of a symmetric tridiagonal matrix, see \cite[Theorem~2.3]{Meurant92}, we get
	\begin{equation}\label{eq:recursive_form}
	d_1=1,\quad d_j=1-\frac{\mu^2\beta_{\rev{j-1}}}{d_{j-1}},\quad j=2,3,\ldots,
	\end{equation}
	and analogously for the $-$ sign, from which the second statement is obtained. The monotonicity of $d_s$ in $\mu$ follows from the discussion below, in particular \cref{eq:quadrature_symmetry}.
\end{proof}

\begin{remark}\label{rem:H_sign}
	Since the eigenvalues of $\bm{H}_{s}^{\pm}$ are independent of the sign, to simplify the notation, we further work with the matrix
	\begin{equation}
	\bm{H}_s := \bm{H}^+_s.
	\end{equation}
\end{remark}

\begin{lemma}\label{th:splitting_TP}
	Assume the tensor product polynomials. Under assumption \cref{eq:omega} and for $\bm{M}$ defined by \cref{eq:MinLemma5}, the constants $\underline{c}$ and $\overline{c}$ defined in~\cref{eq:const_beta} can be obtained as 
	\begin{equation}\label{eq:lemma_splitting}
	\underline{c}=\lambda_{\text{min}}(\bm{H}_{s_K}) \quad \text{and} \quad \overline{c}=\lambda_{\text{max}}(\bm{H}_{s_K}),
	\end{equation}
	where $\lambda_{\text{min}}(\bm{H}_{s_K})$ and $\lambda_{\text{max}}(\bm{H}_{s_K})$ are the smallest and the largest eigenvalue, respectively, of $\bm{H}_{s_K}$ defined in \cref{eq:mat_H}.
\end{lemma}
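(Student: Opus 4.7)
The plan is to apply \cref{th:1} and reduce the global bound to the element-wise inequality
\begin{equation*}
\underline{c}\,\bm{v}^T\!\biggl(\sum_{k=0}^{K-1}a_k^{(j)}\bm{G}_k + a_K^{(j)}\widetilde{\bm{G}}_K\biggr)\bm{v}\le \bm{v}^T\!\biggl(\sum_{k=0}^{K}a_k^{(j)}\bm{G}_k\biggr)\bm{v}\le \overline{c}\,\bm{v}^T\!\biggl(\sum_{k=0}^{K-1}a_k^{(j)}\bm{G}_k + a_K^{(j)}\widetilde{\bm{G}}_K\biggr)\bm{v}
\end{equation*}
for every $\bm{v}\in\mathbb{R}^{N_{\rm P}}$ and every $j$. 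The decisive structural observation is that in \cref{eq:AsumGGF} the leftmost Kronecker factor, corresponding to $\xi_K$, equals $\bm{I}_{s_K}$ for $k<K$ and $\bm{G}_{s_K,1}$ for $k=K$. Setting $m:=N_{\rm P}/s_K$, this lets us write
\begin{equation*}
\sum_{k=0}^{K}a_k^{(j)}\bm{G}_k = \bm{I}_{s_K}\otimes\bm{S}^{(j)} + a_K^{(j)}\,\bm{G}_{s_K,1}\otimes\bm{I}_m,\qquad
\sum_{k=0}^{K-1}a_k^{(j)}\bm{G}_k + a_K^{(j)}\widetilde{\bm{G}}_K = \bm{I}_{s_K}\otimes\bm{S}^{(j)} + a_K^{(j)}\,\widetilde{\bm{G}}_{s_K,1}\otimes\bm{I}_m,
\end{equation*}
where $\bm{S}^{(j)}\in\mathbb{R}^{m\times m}$ collects the Kronecker products in the first $K-1$ variables and is independent of $s_K$.

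Next, I would orthogonally diagonalize $\bm{S}^{(j)}=\bm{Q}\bm{D}\bm{Q}^T$ with $\bm{D}=\diag(d_1,\dots,d_m)$ and apply the similarity $\bm{I}_{s_K}\otimes\bm{Q}$. After a reshuffling of coordinates, both matrices become block-diagonal, the $i$-th block being $d_i\bm{I}_{s_K}+a_K^{(j)}\bm{G}_{s_K,1}$ and $d_i\bm{I}_{s_K}+a_K^{(j)}\widetilde{\bm{G}}_{s_K,1}$, respectively. Dividing each block by $d_i>0$, the $i$-th block of the preconditioned matrix is exactly $\bm{H}^{\pm}_{s_K}$ from \cref{eq:mat_H} with effective parameter $\tilde\mu_i:=|a_K^{(j)}|/d_i$ and sign $\mathrm{sign}(a_K^{(j)})$; by \cref{lemma49x} its non-unit eigenvalues are $1\pm\sqrt{1-d_{s_K}(\tilde\mu_i)}$, where I write $d_s(\cdot)$ for the quantity from \cref{lemma49x} viewed as a function of the parameter.

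The remaining task, and the main obstacle, is to show $\tilde\mu_i\le\mu$ for every $i$, equivalently $\lambda_{\min}(\bm{S}^{(j)})\ge |a_K^{(j)}|/\mu$. Each summand of $\bm{S}^{(j)}$ other than $a_0^{(j)}\bm{I}$ is a Kronecker product containing exactly one factor $\bm{G}_{s_k,1}$, $1\le k\le K-1$, hence is symmetric with operator norm $\lambda_{\max}(\bm{G}_{s_k,1})\le 1/\mu$; this last bound follows from the choice of $\overline{\mu}$ discussed around \cref{eq:omega_max} together with $\mu\le\overline{\mu}$. Consequently, for every unit vector $\bm{v}$,
\begin{equation*}
\bm{v}^T\bm{S}^{(j)}\bm{v}\ge a_0^{(j)}-\frac{1}{\mu}\sum_{k=1}^{K-1}|a_k^{(j)}| = \frac{1}{\mu}\biggl(\mu a_0^{(j)}-\sum_{k=1}^{K}|a_k^{(j)}|\biggr) + \frac{|a_K^{(j)}|}{\mu} \ge \frac{|a_K^{(j)}|}{\mu},
\end{equation*}
by \cref{eq:omega}. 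Since $\tilde\mu_i\le\mu$, the monotonicity part of \cref{lemma49x} gives $d_{s_K}(\tilde\mu_i)\ge d_{s_K}(\mu)$, and hence every eigenvalue of every block lies inside $[\lambda_{\min}(\bm{H}_{s_K}),\lambda_{\max}(\bm{H}_{s_K})]$, which is precisely the claim.
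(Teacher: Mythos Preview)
Your proof is correct, but it follows a genuinely different route from the paper's.

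The paper proceeds additively, in exactly the template of \cref{th:truncated}: it first establishes
\[
\underline{c}\,\bm{v}^T\bigl(|a_K^{(j)}|\bm{G}_0+\mu\,a_K^{(j)}\widetilde{\bm{G}}_K\bigr)\bm{v}
\le \bm{v}^T\bigl(|a_K^{(j)}|\bm{G}_0+\mu\,a_K^{(j)}\bm{G}_K\bigr)\bm{v}
\le \overline{c}\,\bm{v}^T\bigl(|a_K^{(j)}|\bm{G}_0+\mu\,a_K^{(j)}\widetilde{\bm{G}}_K\bigr)\bm{v}
\]
directly from the definition of $\underline{c},\overline{c}$ as the extreme eigenvalues of $\bm{H}_{s_K}$ (via \cref{th:lem2}), then sandwiches the remaining terms $|a_k^{(j)}|\bm{G}_0+\mu\,a_k^{(j)}\bm{G}_k$, $k<K$, and the slack $\bigl(\mu a_0^{(j)}-\sum_k|a_k^{(j)}|\bigr)\bm{G}_0$ between $\underline{c}$ and $\overline{c}$ times themselves using only $\underline{c}\le 1\le\overline{c}$ and nonnegativity, and sums. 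No diagonalization and no monotonicity of $d_{s_K}(\cdot)$ are used.

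Your argument instead exploits the Kronecker structure to block-diagonalize simultaneously, reducing the element-wise comparison to a family of genuine instances of $\bm{H}^{\pm}_{s_K}$ with effective parameters $\tilde\mu_i\le\mu$, and then invokes the monotonicity clause of \cref{lemma49x}. This is more structural: it makes transparent \emph{why} the extremal eigenvalues of $\bm{H}_{s_K}$ at the worst parameter $\mu$ govern the bound, and it would in principle allow sharper element-dependent estimates if one tracked the individual $\tilde\mu_i$. The price is that you need the spectral-norm bound $\lambda_{\max}(\bm{G}_{s_k,1})\le 1/\mu$ and the monotonicity of $d_{s_K}$ in $\mu$, whereas the paper's additive argument needs neither and reuses verbatim the machinery already set up for \cref{th:truediag,th:truncated}.
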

\begin{proof}
	Using \cref{th:1}, we only need to prove for every $\bm{v}\in \mathbb{R}^{N_{\rm P}}$, $j=1,\dots,N_{\rm elem}$,
	\begin{equation}\label{eq:G_for_splitting}
	\underline{c}\,\bm{v}^T\left(\sum_{k=0}^{K-1}{a}_k^{(j)}
	{\bm{G}}_k+a_K^{(j)}\widetilde{\bm{G}}_K\right)\bm{v}\le
	\bm{v}^T\left(\sum_{k=0}^Ka_k^{(j)}\bm{G}_k\right)\bm{v}\le \overline{c}\,
	\bm{v}^T\left(\sum_{k=0}^{K-1}{a}_k^{(j)}
	{\bm{G}}_k+a_K^{(j)}\widetilde{\bm{G}}_K\right)\bm{v}.
	\end{equation}
	From~\cref{eq:lemma_splitting} and \cref{rem:H_sign}, using the same technique as in \cref{th:truncated}, we get
	\begin{equation}
	\underline{c}\,\bm{v}^T\left(\vert a_K^{(j)}\vert\bm{G}_0+ \mu\,a_K^{(j)}\widetilde{\bm{G}}_K\right)\bm{v}\le\bm{v}^T\left(\vert a_K^{(j)}\vert\bm{G}_0+ \mu\, a_K^{(j)}\bm{G}_K\right)\bm{v}\le \overline{c}\,\bm{v}^T\left(\vert a_K^{(j)}\vert{\bm{G}}_0+ \mu\, a_K^{(j)}\widetilde{\bm{G}}_K\right)\bm{v}.
	\end{equation}
	Proceeding further as in \cref{eq:first_for_splitting}--\cref{eq:last_for_splitting} of the proof of \cref{th:truncated}, we obtain the desired inequality~\cref{eq:G_for_splitting}.
\end{proof}   

\begin{lemma}\label{th:splitting_C}
	Assume the complete polynomials. Under assumption \cref{eq:omega} and for $\bm{M}$ defined by \cref{eq:MinLemma6}, the constants $\underline{c}$ and $\overline{c}$ defined in~\cref{eq:const_beta} can be obtained as 
	\begin{equation}\label{eq:lemma_splitting3}
	\underline{c}=\min_{t=1,\dots,s}\lambda_{\text{min}}(\bm{H}_{t})\quad \text{and}\quad
	\overline{c}=\max_{t=1,\dots,s}\lambda_{\text{max}}(\bm{H}_{t}),
	\end{equation}
	where $\lambda_{\text{min}}(\bm{H}_{t})$ and $\lambda_{\text{max}}(\bm{H}_{t})$ are the smallest and the largest eigenvalue, respectively, of $\bm{H}_{t}$ defined in \cref{eq:mat_H}.
\end{lemma}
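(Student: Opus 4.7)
The plan is to mirror the proof of \cref{th:splitting_TP} as closely as possible. By \cref{th:1}, it suffices to establish the element-wise spectral equivalence
\begin{equation}
\underline{c}\,\bm{v}^T\!\left(\sum_{k=0}^K a_k^{(j)}\widetilde{\bm{G}}_k\right)\!\bm{v}\le \bm{v}^T\!\left(\sum_{k=0}^K a_k^{(j)}\bm{G}_k\right)\!\bm{v}\le \overline{c}\,\bm{v}^T\!\left(\sum_{k=0}^K a_k^{(j)}\widetilde{\bm{G}}_k\right)\!\bm{v}
\end{equation}
for each $j=1,\dots,N_{\rm elem}$, with constants as in \cref{eq:lemma_splitting3}. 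Once this local equivalence is in hand, the argument closes exactly as in \cref{th:truncated}: one uses the dominance condition \cref{eq:omega} to absorb the multiplicative factor $\mu$ and the mean term $a_0^{(j)}\bm{G}_0$ via the chain of inequalities \cref{eq:first_for_splitting}--\cref{eq:last_for_splitting}.

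The genuinely new content lies in proving the local bound, where we lose the Kronecker factorization that made \cref{th:splitting_TP} essentially a one-dimensional analysis. My plan is to exploit the block-tridiagonal structure that $\bm{G}_k$ ($k\ge 1$) acquires when the basis is ordered by total degree: because $\alpha_n=0$ by \cref{tab:Wiener-Askey}, the diagonal blocks vanish, so only couplings between consecutive total-degree levels $t-1$ and $t$ survive. The matrix $\widetilde{\bm{G}}_k$ zeroes out exactly the last such pair of off-diagonal blocks, and the construction in \cref{eq:MinLemma6} produces a $2\times 2$ block-diagonal preconditioner whose top-left block is itself the stochastic Galerkin matrix on $V^C_{s-1}$.

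The appearance of $\min_{t=1,\dots,s}$ and $\max_{t=1,\dots,s}$ in \cref{eq:lemma_splitting3} strongly hints at a level-by-level reduction. At level $t$, the coupling between $V^C_{t-1}$ and $W^C_t$ is weighted by $\sqrt{\beta_t}$, which is precisely the off-diagonal entry that distinguishes $\bm{G}_{t,1}$ from $\mathrm{diag}(\bm{G}_{t-1,1},0)$ in \cref{eq:mat_H}. I therefore plan to perform a rank-2 perturbation analysis at each level (mirroring \cref{lemma49x}) and show that its spectral contribution is sandwiched by $\lambda_{\min}(\bm{H}_t)$ and $\lambda_{\max}(\bm{H}_t)$. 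The worst case over $t$ then produces the claimed constants, in the same spirit as \cref{th:truediag} and \cref{th:truncated}, where the bound is governed by the extreme root of the relevant orthogonal polynomial.

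The main obstacle is the joint treatment of all $K$ directions in the last off-diagonal block: unlike the tensor product case, where only $\xi_K$ enters the splitting, here the block connecting $W^C_{s-1}$ to $W^C_s$ pools couplings from every $\xi_k$, $k=1,\dots,K$, and these couplings are indexed by multi-degrees rather than by a single variable. The plan is to find a basis ordering, or a block-orthogonal similarity transform within each total-degree level, that diagonalizes this combined coupling and exposes an underlying one-dimensional tridiagonal pattern to which \cref{lemma49x} can be applied. If this can be achieved uniformly across $t$, the corresponding bounds collect into $\min_t\lambda_{\min}(\bm{H}_t)$ and $\max_t\lambda_{\max}(\bm{H}_t)$, after which the deterministic dominance estimate proceeds verbatim as in \cref{th:truncated}.
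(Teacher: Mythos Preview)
Your overall scaffold is right: reduce via \cref{th:1} to the element-wise inequality, then---exactly as in \cref{th:truediag,th:truncated,th:splitting_TP}---reduce that via the $\pm\mu$ dominance trick to the per-direction bound
\begin{equation}
\underline{c}\,\bm{v}^T(\bm{G}_0\pm\mu\,\widetilde{\bm{G}}_k)\bm{v}\le \bm{v}^T(\bm{G}_0\pm\mu\,\bm{G}_k)\bm{v}\le \overline{c}\,\bm{v}^T(\bm{G}_0\pm\mu\,\widetilde{\bm{G}}_k)\bm{v},\qquad k=1,\dots,K.
\end{equation}
This is precisely the paper's \cref{eq:lemma6_2}. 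Where you diverge is in how to prove it.

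You commit to the total-degree ordering and propose to hunt for a block-orthogonal similarity within each level that diagonalizes the coupling block between $W^{\rm C}_{t-1}$ and $W^{\rm C}_{t}$. You call this ``the main obstacle'' and it is one---in that ordering the off-diagonal blocks are rectangular and mix contributions from different fiber lengths, so no within-level rotation will produce a clean one-dimensional tridiagonal picture uniformly in $t$. The paper sidesteps this entirely: for each fixed $k$ it \emph{reorders the whole basis} anti-lexicographically with the $k$-th index changing fastest (as in \cref{eq:ex_TP_C_A}). Under this $k$-dependent permutation, $\bm{G}_k$ becomes block-diagonal, the blocks being the tridiagonal matrices $\bm{G}_{t,1}$ indexed by the ``fibers'' $\{(j_1,\dots,j_K):\ j_i\text{ fixed for }i\ne k\}$, each of length $t\in\{1,\dots,s\}$; the matrix $\widetilde{\bm{G}}_k$ has the same block structure with the last sub/super-diagonal entry of each block annihilated. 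The generalized eigenvalue problem $(\bm{I}\pm\mu\bm{G}_k)\bm{v}=\lambda(\bm{I}\pm\mu\widetilde{\bm{G}}_k)\bm{v}$ therefore decouples into the family $\bm{H}_t$, $t=1,\dots,s$, and the extremes over $t$ give $\underline{c}$ and $\overline{c}$ directly. In short, the missing idea is that the right similarity is a $k$-dependent \emph{permutation} of the full basis, not a rotation within total-degree levels.
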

\begin{proof}
	Using \cref{th:1}, we only need to prove for every $\bm{v}\in\mathbb{R}^{N_{\rm P}}$
	\begin{equation}\label{eq:lemma6_1}
	\underline{c}\,\bm{v}^T\left(\sum_{k=0}^{K}
	a_k^{(j)}{\widetilde{\bm{G}}}_k\right)\bm{v}\le
	\bm{v}^T\left(\sum_{k=0}^{K}
	a_k^{(j)}\bm{G}_k\right)\bm{v}\le \overline{c}\,
	\bm{v}^T\left(\sum_{k=0}^{K}{a}_k^{(j)}
	\widetilde{{\bm{G}}}_k\right)\bm{v},\quad j=1,\dots,N_{\rm elem}.
	\end{equation}
	It suffices to show
	\begin{equation}\label{eq:lemma6_2}
	\underline{c}\,
	\bm{v}^T( \bm{G}_0\pm\mu\,\widetilde{\bm{G}}_k)\bm{v}
	\le \bm{v}^T( \bm{G}_0\pm\mu   \bm{G}_k)\bm{v}\le  
	\,\overline{c}
	\bm{v}^T( \bm{G}_0\pm\mu\,\widetilde{\bm{G}}_k)\bm{v}, \quad k=1,\dots,K,
	\end{equation}
	from which \cref{eq:lemma6_1} can be obtained analogously as in \cref{th:truediag,th:truncated,th:splitting_TP}. 
	
	Since now the matrices do not have the tensor product form, to prove \cref{eq:lemma6_2}, we cannot proceed in the same way as in the previous lemmas. 
	The constants $\underline{c}$ and $\overline{c}$ are obtained as the extreme eigenvalues of the generalized eigenvalue problem
	\begin{equation}\label{eq:lemma6_G0G1_2}
	({\bm I}_{N^{\rm C}_s}\pm \mu\,{\bm{G}}_k)\bm{v}=\lambda 
	(\bm{I}_{N^{\rm C}_s}\pm \mu\, \widetilde{\bm{G}}_k)\bm{v},
	\end{equation}
	where $N^{\rm C}_t=\genfrac(){0pt}{2}{K+t-1}{K}$
	denotes the size of the basis of $K$-variate complete polynomials of degree at most $t-1$.
	
	Assume first $k=1$.
	Let $\bm{G}_1^{\rm R}$ and $\widetilde{\bm{G}}_1^{\rm R}$
	be the matrices obtained from $\bm{G}_1$ and $\widetilde{\bm{G}}_1$, respectively, by reordering 
	their rows and columns in such manner that the corresponding basis $K$-variate orthonormal polynomials are ordered anti-lexicographically as, for example, in~\cref{eq:ex_TP_C_A} (instead of the ordering 
	according to their growing total degree, which is used so far).
	Then both $\bm{G}_1^{\rm R}$ and $\widetilde{\bm{G}}_1^{\rm R}$ become block-diagonal. The diagonal blocks of $\bm{G}_1^{\rm R}$
	are tridiagonal matrices
	$\bm{I}_t+\bm{G}_{t,1}$ of variable sizes $t\times t$,
	$t\in\{1,\dots,s\}$. The corresponding diagonal blocks 
	of $\widetilde{\bm{G}}_1^{\rm R}$ are equal to
	those of $\bm{G}_1^{\rm R}$ up to the last sub-
	and super-diagonal elements which are annihilated in
	$\widetilde{\bm{G}}_1^{\rm R}$.
	The number and sizes of the diagonal blocks depend on 
	$K$ and $s$. For $k\neq 1$, we analogously choose ordering where the $k$-th index changes the fastest.
	Thus, using \cref{rem:H_sign}, the eigenvalue problem~\cref{eq:lemma6_G0G1_2} reduces to a number of independent eigenvalue problems 
	with the matrices $\bm{H}_t$ of sizes $t=1,\dots,s$ defined in~\cref{eq:mat_H}.
\end{proof}

To obtain the actual spectral bounds $\underline{c}$ and $\overline{c}$ in \cref{th:splitting_TP} and in \cref{th:splitting_C}, we need to evaluate $d_s^{-1} = e_{s}^T(\bm{I}_{s}+\mu\bm{G}_{s,1})^{-1}e_{s}$. This can be done recursively, using \cref{eq:recursive_form}, but it may be advantageous to exploit the relation between Jacobi matrices and the Gauss-Christoffel quadrature, as we will describe in this part. Defining 
\begin{equation}
f(t) = \frac{1}{1+\mu t}\,, \ \ J_{s} = \begin{pmatrix}&&&1\\&&1&\\&\iddots&&\\1&&&\end{pmatrix}\,, \ \ \text{and} \quad \widehat{\bm{G}}_{s,1} = J_{s}\,\bm{G}_{s,1}\,J_{s}^{-1},
\end{equation}
we can rewrite
\begin{align}
d_s^{-1} =\; e_{s}^T(\bm{I}_{s}+\mu\bm{G}_{s,1})^{-1}e_{s} &= e_{s}^T\, f(\bm{G}_{s,1})\, e_{s}= e_{1}^T\,J_{s}\, f(\bm{G}_{s,1})\,J_{s}^{-1}\,e_{1} = e_{1}^T\, f(\widehat{\bm{G}}_{s,1})\,e_{1}.
\end{align}
Since $\widehat{\bm{G}}_{s,1}$ is again a Jacobi matrix, $e_{1}^T\, f(\widehat{\bm{G}}_{s,1})\,e_{1}$ can be computed as the Gauss--Christoffel quadrature of the integral of $f$, i.e.,
\begin{equation}\label{eq:quadrature_relations}
e_{1}^T\, f(\widehat{\bm{G}}_{s,1})\,e_{1} = \sum_{j=1}^{s} \widehat{\omega}_j^{(s)}f(\widehat{\lambda}_j^{(s)}) = \sum_{j=1}^{s} \frac{\widehat{\omega}_j^{(s)}}{1+\mu\widehat{\lambda}_j^{(s)}},
\end{equation}
where $\widehat{\lambda}_j^{(s)}$ and $\widehat{\omega}_j^{(s)}$ are the nodes and the weights, respectively, of the Gauss--Christoffel quadrature defined by $\widehat{\bm{G}}_{s,1}$; see \cite[Chap.~3]{Liesen2013Krylov} for a comprehensive overview of relations between Jacobi matrices, orthogonal polynomials, underlying distribution functions, and the Gauss--Christoffel quadrature. 

Due to symmetry of the considered distributions, the weights and nodes are also symmetric, and we can write
\begin{equation}\label{eq:quadrature_symmetry}
d_s^{-1} = \sum_{j=1}^{s} \frac{\widehat{\omega}_j^{(s)}}{1+\mu\widehat{\lambda}_j^{(s)}} = \sum_{j=1}^{s} \frac{\widehat{\omega}_j^{(s)}}{1-\mu^2(\widehat{\lambda}_j^{(s)})^2}.
\end{equation}
Since the weights of the Gauss--Christoffel quadrature are positive, we also obtain monotonic dependence on $\mu$, meaning that the conditioning of $\bm{M}^{-1}\bm{A}$ improves with decreasing $\mu$. However, for a given $\mu$, one can have decreasing as well as increasing behavior in $s$. In the following part, we provide more explicit expressions for the weights and nodes for the considered approximation polynomials.
It is well known that the nodes are the roots $\{\lambda_j^{(s)}\}$ of the highest-degree polynomial $\psi_{s}$. Moreover, the weights can be obtained from  $\psi_{s}$ as well; see \cite[p.~120]{Liesen2013Krylov}.

\paragraph{Gegenbauer polynomials} For the Gegenbauer polynomials, the weights are given by
\begin{equation}
\widehat{\omega}_j^{(s)} = \frac{2s+2\gamma-2}{s+2\gamma-1}\,\frac{1-(\lambda_j^{(s)})^2}{s},
\end{equation}
yielding
\begin{equation}\label{eq:kappa_splitting_gegenbauer}
d_s^{-1} = \frac{2s+2\gamma-2}{s+2\gamma-1}\,\frac{1}{s}\,\sum_{j=1}^{s} \frac{1-(\lambda_j^{(s)})^2}{1-\mu^2(\lambda_j^{(s)})^2},
\end{equation}
which for $\mu = 1$ simplifies to
\begin{equation}\label{eq:Gegenbauer_simple}
d_s^{-1} = \frac{2s+2\gamma-2}{s+2\gamma-1}.
\end{equation}
Substituting $\gamma = 1$ and $\gamma = \frac{1}{2}$, we obtain the spectral bounds of $\bm{H}_s$ for the Chebyshev and Legendre polynomials, respectively. 

\paragraph{Hermite polynomials} For the Hermite polynomials, the weights are given by
\begin{equation}
\widehat{\omega}_j^{(s)} = \frac{1}{s},
\end{equation}
yielding
\begin{equation}\label{eq:kappa_splitting_hermite}
d_s^{-1} = \frac{1}{s}\,\sum_{j=1}^{s} \frac{1}{1-\mu^2(\lambda_j^{(s)})^2}.
\end{equation}

\begin{remark}\label{rem413}
  \rev{For the splitting-based preconditioning, the constants $\underline{c}$ and $\overline{c}$ can be used to estimate the strengthened Cauchy-Bunyakowski-Schwarz inequality constant $\gamma_{\text{{\tiny CBS}}}\in[0,1)$, defined as the smallest $\gamma$ satisfying
  	\begin{equation}
  	({v}_U,{v}_W)_A\le\gamma({v}_U,{v}_U)_A({v}_W,{v}_W)_A,\qquad {v}_U\in U,\; {v}_W\in W;
  	\end{equation}
  	see, e.g.,~\cite{Axelsson1996Iterative,Bespalov2014Energy,KraMar09,Pultarova2015Adaptive}). In particular, it holds that
  	\begin{equation}
  		\gamma_{\text{{\tiny CBS}}}\leq\overline{c}-1 \  \, (\,= 1 - \underline{c}\,).
  	\end{equation}} 

\rev{The constant $\gamma_{\text{{\tiny CBS}}}$ can be used in the two-by-two block Gauss-Seidel preconditioning (also called the Schur-complement or multiplicative two-level preconditioning), with the resulting condition number 
 	bounded as
 	\begin{equation}
 	\kappa(\bm{M}_{\rm GS2}^{-1}\bm{A}) \leq \frac{1}{1-\gamma_{\text{{\tiny CBS}}}^2}  \ \, \left(\,\le \frac{1}{1-(\overline{c}-1)^2}= \frac{1}{d_t}\,\right);
 	\end{equation} 
 	see, e.g., \cite[Chapter~9]{Axelsson1996Iterative}, \cite[Sections~2.2 and~2.3]{KraMar09},
 	\cite{Sousedik2014Truncated,Sousedik2014Hierarchical}, and the examples in \cref{sec:num_exp} with the results summarized in Tables~\ref{tab:set4} and~\ref{tab:set5}.}

\rev{Further, if ${u}_U$ and ${u}_V$ represent the solutions of our problem in the spaces $U$ and in $V$, respectively, then the `solution improvement' $ {u}_V-{u}_U$
can be estimated as
\begin{equation}\label{eq:improvement_bound}
\Vert{e}_W\Vert_A^2\le\Vert {u}_V-{u}_U\Vert_A^2\le \frac{1}{1-\gamma_{\text{{\tiny CBS}}}^2}\Vert{e}_W\Vert_A^2.
\end{equation}
where ${e}_W$ is a solution of a certain problem restricted to the (small) space $W$;
see, e.g., \cite[Theorem~5.2]{AinsworthOden}.  Using \cref{eq:improvement_bound}, one can estimate $\Vert {u}_V-{u}_U\Vert_A$ in advance, without solving the (large) problem in $V$, which can be exploited in adaptivity.}

\rev{Due to Galerkin orthogonality, we have $\Vert u-u_U\Vert_A^2=\Vert u-u_V\Vert_A^2+
\Vert u_V-u_U\Vert_A^2$ where $u$ is the exact solution of~\eqref{eq:weak} in $\widetilde{V}$. Thus~\eqref{eq:improvement_bound} provides also an a~posteriori
lower bound of the energy norm of the error $u-u_U$ in $\widetilde{V}$; see, e.g., \cite{AinsworthOden,Bespalov2014Energy,Crowder2018}. 
}
\end{remark}

\subsection{Numerical examples}\label{sec:num_exp}

In this section, we illustrate on some simple numerical examples how to apply the introduced theoretical tool. \rev{First, we} consider a one-dimensional problem in $D=[0,1]$ with homogeneous Dirichlet boundary conditions, uniform mesh with $N_{\rm elem}=30$ elements and with the nodes $x_0=0$, \dots, $x_{N_{\rm elem}}=1$, $K=3$, uniform distributions of $\xi_1$, $\xi_2$, and $\xi_3$ with images in $[-1,1]$. The approximation and test spaces are spanned by a tensor product of continuous piece-wise linear functions defined
on $D$ and of $K$-variate Legendre polynomials. We define three different settings for $a_k({x})$, see \cref{tab:setting} and \cref{fig:func_a},
which are considered constant on every interval,
$a_k(x)=a_k(x_j^c)$ on $(x_j,x_{j+1})$
where $x_j^c=(x_j+x_{j+1})/2$, 
$j=0,\dots,N_{\rm elem}-1$.
We compute the corresponding $\mu$ and 
$\mu_{\rm class}$ defined in~\cref{eq:omega} and~\cref{eq:stronger_assum}, respectively, i.e.,
\begin{equation} \label{eq:mu_new_and_old}
\mu=\max_{j=1,\dots,N_{\rm elem}}\sum_{k=1}^3\vert a_k(x^c_j)\vert, \quad \mu_{\rm class}=\sum_{k=1}^3\max_{j=1,\dots,N_{\rm elem}}\vert a_k(x^c_j)\vert.
\end{equation}

\begin{figure}
	\captionsetup[subfigure]{labelformat=empty}
	\centering
	\begin{subfigure}[b]{0.3\textwidth}
		\includegraphics[width=\textwidth]{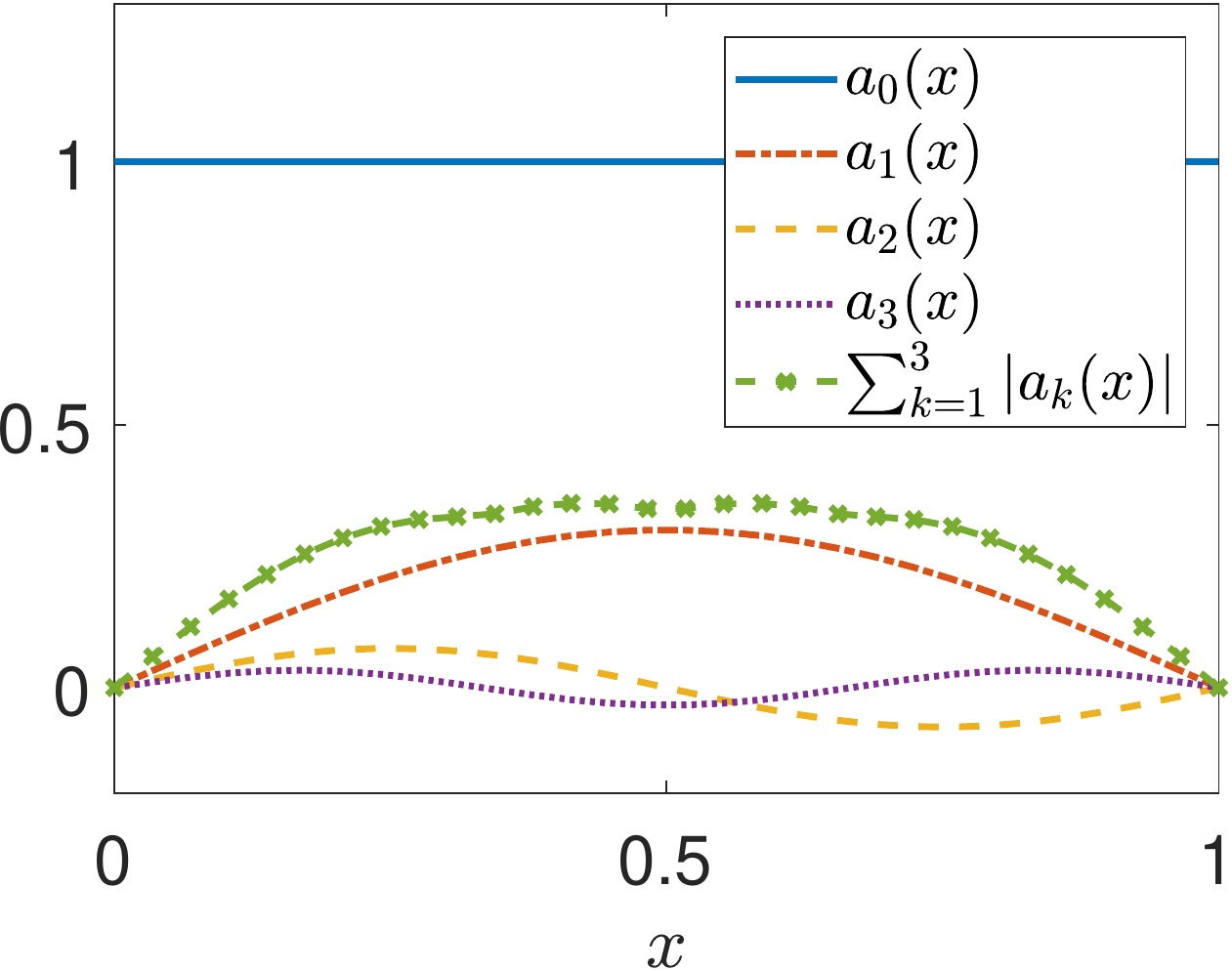}
		\caption{setting 1}
		\label{fig:1a}
	\end{subfigure}\quad
	\begin{subfigure}[b]{0.3\textwidth}
		\includegraphics[width=\textwidth]{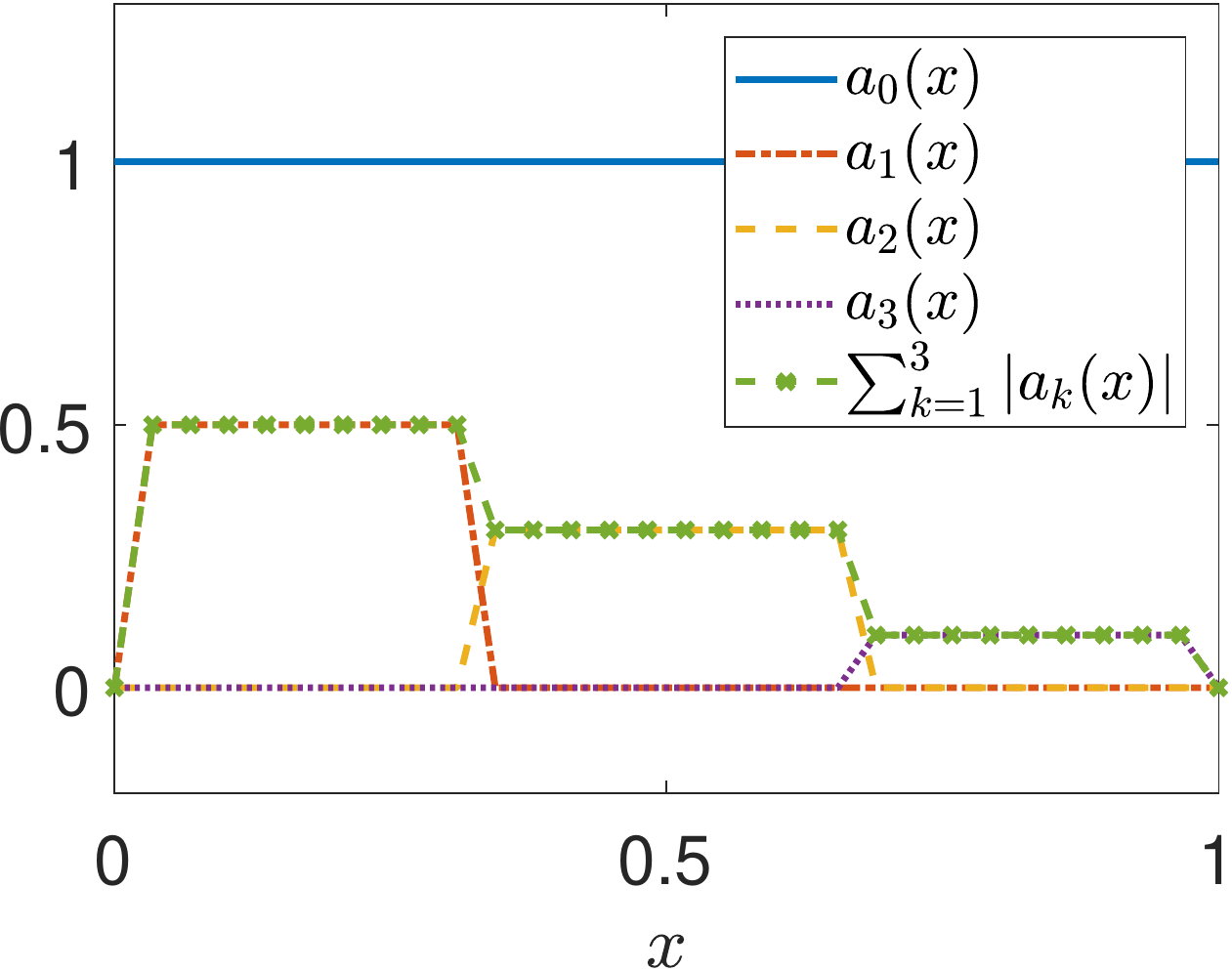}
		\caption{setting 2}
		\label{fig:1b}
	\end{subfigure}\quad
	\begin{subfigure}[b]{0.3\textwidth}
		\includegraphics[width=\textwidth]{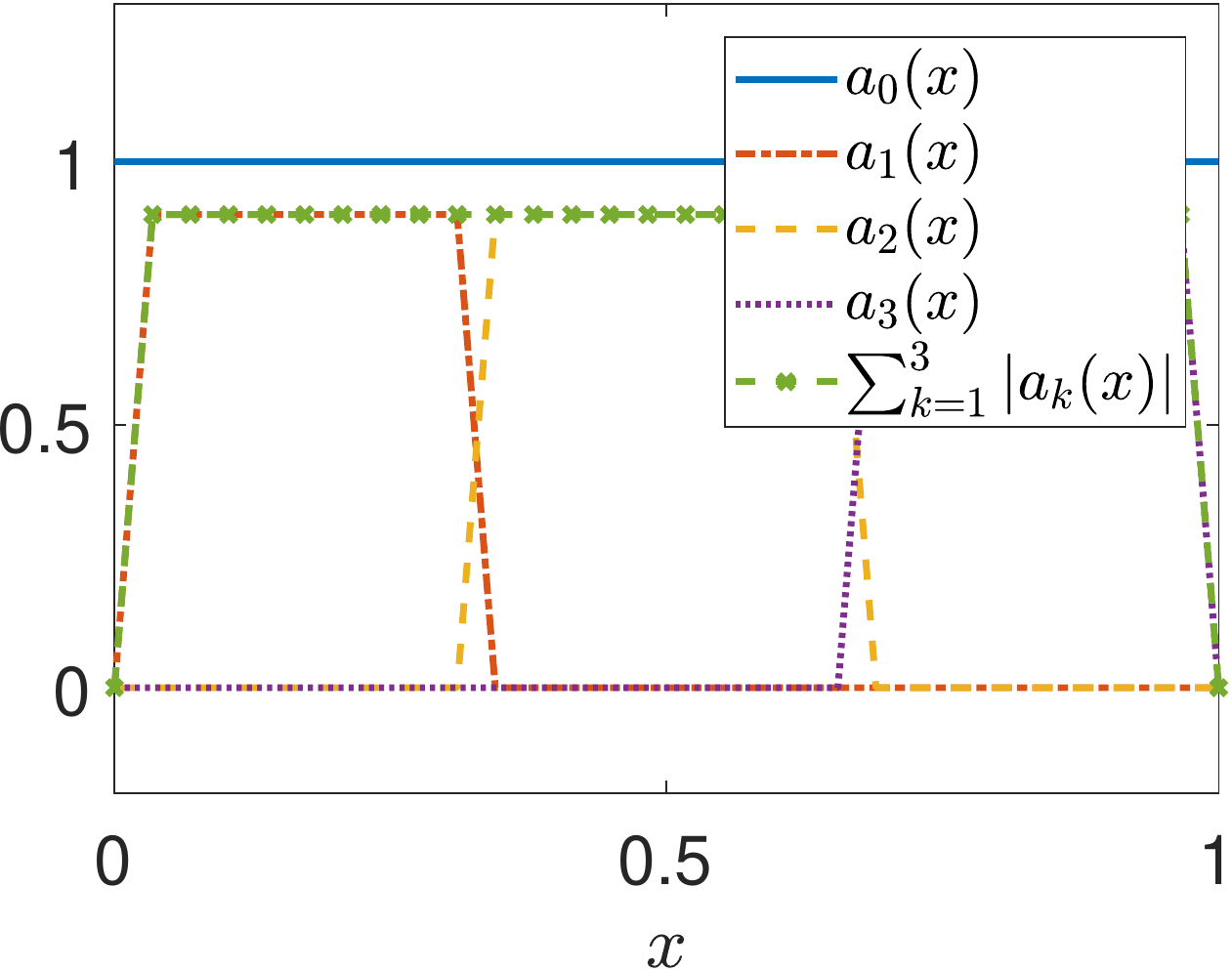}
		\caption{setting 3}
		\label{fig:1c}
	\end{subfigure}
	\caption{Functions $a_k(x)$, $k = 1,\ldots,K$, for the settings from \cref{tab:setting}.}\label{fig:func_a}
\end{figure}
\begin{table}
	\caption{Problem setting\rev{, one-dimensional problems.}}\label{tab:setting}
	\centering
	\bgroup\def\arraystretch{1.8}%
	{\footnotesize \begin{tabular}{c|cccc|cc}
		setting & $a_0({x})$ &$a_1({x})$& $a_2({x})$& $a_3({x})$&$\mu$&$\mu_\text{class}$\\\hline
		1& $1$&$\frac{0.3}{1^2}\sin(1\pi x)$ &
		$\frac{0.3}{2^2}\sin(2\pi x)$
		&$\frac{0.3}{3^2}\sin(3\pi x)$&0.35&0.41\\
		2& $1$& $0.5 \chi_{(0,1/3)}$& $0.3 \chi_{(1/3,2/3)}$& $0.1 \chi_{(2/3,1)}$&0.5&0.9\\
		3& $1$& $0.95 \chi_{(0,1/3)}$& $0.95 \chi_{(1/3,2/3)}$&
		$0.95 \chi_{(2/3,1)}$&0.95&2.85
	\end{tabular}}
	\egroup
\end{table}
\rev{We apply the mean-based preconditioning, see \cref{sec:mean-based}, to each of the settings.
We} compute the true extreme eigenvalues 
of the resulting preconditioned matrices, the theoretical spectral bounds $\underline{c}$ and $\overline{c}$ given by \cref{th:truediag} and \cref{cor1}, and 
the classical spectral bounds 
\begin{equation}
\underline{c}_{\rm class}=1-\mu_{\rm class} \lambda_{\rm max}(\psi_{s}),\quad
\overline{c}_{\rm class}=1+\mu_{\rm class} \lambda_{\rm max}(\psi_{s}).
\end{equation} 
derived, e.g.,~in~\cite[Theorem~3.8]{Powell2009Block}.
The roots of the Legendre orthogonal polynomials are obtained from~\cite{Lowan}. 
The results for \rev{complete} polynomials are summarized in \cref{tab:1}. For all considered settings, we obtained
\begin{equation}\label{eq:old}
\underline{c}_{\rm class}\le 
\underline{c}\le \lambda_{\rm min}(\bm{M}^{-1}\bm{A})\le \lambda_{\rm max}(\bm{M}^{-1}\bm{A})\le \overline{c}\le \overline{c}_{\rm class}.
\end{equation}
Moreover, for the third setting, the classical bounds do not provide any useful 
information because $\mu_{\rm class} \lambda_{\rm max}(\psi_{s})>1$,
and thus $\underline{c}_{\rm class}<0$.

\begin{table}
	\caption{Mean-based preconditioning
		and \rev{complete} polynomials: a comparison of the new and the classical spectral bounds, and the extreme eigenvalues of the  preconditioned matrix for the three different settings from \cref{tab:setting}.}
	\label{tab:1}
	\centering
	{\footnotesize \bgroup\def\arraystretch{1.2}%
	\begin{tabular}{c|c|c|cccccc|cc}
		&$s-1$&$\rev{\kappa(\bm{A})}$&$\underline{c}_{\rm class}$
		&$\underline{c}$&$\lambda_{\rm min}(\bm{M}^{-1}\bm{A})$&$\lambda_{\rm max}(\bm{M}^{-1}\bm{A})$&$\overline{c}$&
		$\overline{c}_{\rm class}$ & $\overline{c}/ \underline{c} $  &$\overline{c}_{\rm class}/ \underline{c}_{\rm class}$\\[.2em]
		\hline&&&&&&&\\[-.8em]
		\multirow{6}{*}{\rotatebox{90}{setting 1}}&
		1 & 458.42& 0.76  &  0.80 &   0.83  & 1.17 &  1.20  &  1.24 &   1.51 & 1.62  \\
		&2 & 498.47 &0.68  &  0.73 &   0.76   & 1.24 &   1.27 &   1.32  &   1.75  &1.92\\
		&\dots &&&&&&&&&\\
		&6 & 546.55 &0.61  &  0.67  &  0.69   & 1.31  &  1.33  &  1.39  &   2.00 & 2.26\\
		&7 & 550.80 &0.61  &  0.66  &  0.68  &  1.32  &  1.34  &  1.39  &  2.02 &  2.29 \\[.2em]
		\hline&&&&&&&\\[-.8em]
		\multirow{6}{*}{\rotatebox{90}{setting 2}}&
		1 & 542.75& 0.48  &  0.71  &  0.71   & 1.29  &  1.29 &   1.52  &   1.81  &  3.16\\
		&2 & 629.41 &0.30  &  0.61  &  0.61  &  1.39  &  1.39  &  1.70  &   2.26  &  5.60\\
		&\dots &&&&&&&&&\\
		&6 & 739.40  &0.15 &   0.53  &  0.53   & 1.47  &  1.47  &  1.85  &   2.81  & 12.72\\
		&7 & 749.57  &0.14  &  0.52  &  0.52 &   1.48 &   1.48 &   1.86  &  2.85  & 13.73\\[.2em]
		\hline&&&&&&&\\[-.8em]
		\multirow{6}{*}{\rotatebox{90}{setting 3}}&
		1 & 947.79 & -0.65  &   0.45 &   0.45 &   1.56 &   1.56 &   2.65 &    3.43 & - \\
		&2 & 1596.34  &-1.21  &   0.26 &   0.26  &  1.74&    1.74   &  3.21&     6.57  & - \\
		&\dots &&&&&&&&&\\
		&6 & 4576.93 & -1.71  &   0.10 &   0.10  &  1.90  & 1.90   &   3.71 &   19.34  & - \\
		&7 & 5294.63 & -1.74 &    0.09 &   0.09  &  1.91  &   1.91  &  3.74  &   21.80 &  - \\	
	\end{tabular}
	\egroup}
\end{table}

\rev{Next, we consider a two-dimensional problem in $D=[0,1]^2$ with homogeneous Dirichlet boundary conditions, uniform mesh with $N_{\rm elem}=20^2 = 400$ elements, uniform distributions of $\xi_k$, $k=1,\ldots,K$, with images in $[-1,1]$. The approximation and test spaces are spanned by a tensor product of continuous piece-wise bilinear functions defined
on $D$ and of $K$-variate Legendre polynomials. We define two different settings for $a_k({\bm{x}})$, $k=0,\ldots,K$, see \cref{tab:setting_2D}, which are analogously as before considered constant on every element.  
For both settings, we investigate the spectral bounds for the splitting-based preconditioning (SB) and the two-by-two block Gauss-Seidel preconditioning (GS2). The results are shown in \cref{tab:set4,tab:set5}. According to Remark~\ref{rem413}, the upper bound of $\kappa(\bm{M}_{\rm GS2}^{-1}\bm{A})$ is obtained as $1/d_t$. The values of $t$ used in Lemma~\ref{th:splitting_TP} for obtaining $\underline{c}$ and $\overline{c}$
are presented. 
\begin{table}
	\caption{Problem setting, two-dimensional problems.}\label{tab:setting_2D}
	\centering
	\bgroup\def\arraystretch{1.8}%
	{\footnotesize \begin{tabular}{c|cccc|c}
		setting & $a_0({x_1,x_2})$ &$a_1({x_1,x_2})$& $a_2({x_1,x_2})$& $a_3({x_1,x_2})$&$\mu$\\\hline
		4& $1$&${0.3}\sin(1\pi x_1)$ &
		${0.3}\sin(2\pi x_2)$
		&${0.3}\sin(2\pi x_1)$&0.83
	\end{tabular}}
	{\footnotesize \begin{tabular}{c|cccc|c}
		setting & $a_0({x_1,x_2})$ &$a_{2k+1}(x_1,x_2)$& $a_{2k+2}(x_1,x_2)$\\\hline
		5& $1$&$\frac{0.9}{K}\sin\left((k+1)\pi x_1\right)$&$\frac{0.9}{K}\sin\left((k+1)\pi x_2\right)$
\end{tabular}}
	\egroup
\end{table}
\vspace*{.4cm}
\begin{table}[ht]
		\caption{Splitting-based and two-by-two block Gauss-Seidel preconditioning for complete polynomials. Results for setting 4 (Table~\ref{tab:setting_2D}), 
		$K=3$, polynomial degree $s-1=1,\dots,5$.}\label{tab:set4}
		\centering
		\bgroup\def\arraystretch{1.2}
		{\footnotesize \begin{tabular}{c|c|cc|cc|c}
			$s-1$ &		$\kappa(\bm{A})$ &  	
			$\kappa(\bm{M}_{\rm SB}^{-1}\bm{A})$ &   
			$\overline{c}/\underline{c}$ 	
			& $\kappa(\bm{M}_{\rm GS2}^{-1}\bm{A})$ & $1/d_t$&$t$\\
			\hline
			1 &265.65&1.76&2.83& 1.08& 1.30& 2\\
			2 &334.62&2.13 &2.90&1.15& 1.31& 3\\
			3 &384.58&2.36&2.90&1.20&1.31&3  \\
			4 &420.15&2.50&2.90&1.22&1.31 & 3\\
			5 &446.06&2.56&2.90&1.24&1.31&3
	\end{tabular}}
	\egroup
\end{table}
\begin{table}[ht]
	\caption{Splitting-based and two-by-two block Gauss-Seidel preconditioning for complete polynomials. Results for setting 5 (Table~\ref{tab:setting_2D}),
	polynomial degree $s-1 = 2$,  expansion length $K=1,\dots,7$. }\label{tab:set5}
	\centering
	\bgroup\def\arraystretch{1.2}
	{\footnotesize \begin{tabular}{c|c|cc|cc|c|c}
		$K$ &		$\kappa(\bm{A})$ &  	
		$\kappa(\bm{M}_{\rm SB}^{-1}\bm{A})$ &   
		$\overline{c}/\underline{c}$ 	
		& $\kappa(\bm{M}_{\rm GS2}^{-1}\bm{A})$ & $1/d_t$&$t$&$\mu$\\
		\hline
		1& 580.00 & 3.36 & 3.38 & 1.41 & 1.42 & 3&0.90\\
		2 & 437.88  & 2.74 & 3.38 & 1.28 & 1.42 & 3& 0.90\\
		3 & 334.62 &  2.13 & 2.90  & 1.15 & 1.31  & 3 &0.83\\
		4 & 293.51  & 1.88  & 2.70 & 1.10 & 1.27  & 3 &0.79\\
		5 & 272.26  &  1.73 &  2.59 & 1.08 &  1.24  & 2&0.77 \\
		6 & 258.72  &  1.63 & 2.52  & 1.06 &  1.23 &  2&0.75\\
		7& 247.96 & 1.56& 2.48&1.05 & 1.22 & 2&0.74
	\end{tabular}}
	\egroup
\end{table}}

\section{Conclusion}\label{sec:conclusion}
We introduced a new tool for obtaining guaranteed and two-sided spectral bounds for discretized stochastic Galerkin problems preconditioned by a matrix with modified stochastic part from the spectral information of certain small matrices, from which the large stochastic Galerkin matrix is constructed. Moreover, this analysis only requires point-wise or local dominance of the deterministic part of the expansion of the parameter-dependent function $a(\bm{x},\bm{\xi})$, represented here by the parameter $\mu$, while the standard bounds are typically based on the absolute global dominance. The derived estimates \rev{are therefore also applicable} to problems where global dominance is not achieved. We showed for three types of block-diagonal preconditioners, including less standard ones, how this technique is used to obtain spectral bounds depending solely on $\mu$ and the properties of the stochastic approximation space (here classical orthogonal polynomials). From
the locality of $\mu$, it also follows that the obtained bounds are \rev{tighter than the classical ones.}
Similar ideas based on local properties of a preconditioner appear also in~\cite{GEMANIST2018}.

\section*{Acknowledgments}
The authors are grateful to Stefano Pozza for sharing his insight into the Gauss--Christoffel quadrature. \rev{We would also like to thank
anonymous referees for their comments and suggestions.}

\end{document}